\documentclass[12]{amsart}

\usepackage{amssymb,amsmath,latexsym,amsthm}
\usepackage{MnSymbol}
\usepackage{graphicx}
\usepackage{fontenc}%
\usepackage{mathrsfs}
\usepackage{color}

\usepackage{lineno}

\usepackage[verbose]{wrapfig}

\usepackage{subfigure}
\renewcommand{\thesubfigure}{\thefigure.\arabic{subfigure}}
\makeatletter
\renewcommand{\p@subfigure}{}
\renewcommand{\@thesubfigure}{\thesubfigure:\hskip\subfiglabelskip}
\makeatother

\usepackage{pstricks,pst-text,pst-grad,pst-node,pst-tree,pst-plot,pst-barcode}
\usepackage{pstricks-add}
\usepackage{pst-solides3d}
\usepackage[tiling]{pst-fill}

\usepackage[english]{babel} 
\usepackage{pstricks} 
\usepackage{pstricks-add} 
\usepackage{paralist}

\newtheorem{theorem}{Theorem}[section]
\newtheorem{definition}[theorem]{Definition}

\newtheorem{proposition}[theorem]{Proposition}
\newtheorem{example}[theorem]{Example}

\newtheorem{remark}[theorem]{Remark}
\newtheorem{conjecture}[theorem]{Conjecture}



\def\defineTColor#1#2{%
 \newpsstyle{#1}{%
  fillstyle=vlines,hatchcolor=#2,
  hatchwidth=0.1\pslinewidth,
  hatchsep=1\pslinewidth}%
  }
\defineTColor{Tgray}{gray}
\defineTColor{Tgreen}{green}
\defineTColor{Tyellow}{yellow}
\defineTColor{Tred}{red} 
\defineTColor{Tblue}{blue} 
\defineTColor{Tmagenta}{magenta}
\defineTColor{Torange}{orange}
\defineTColor{Twhite}{white}
\defineTColor{Tgray}{lgrey}
\defineTColor{Tbgreen}{brightgreen}


\newcommand{\dnear}{\delta_{\Phi}}
\newcommand{\dcap}{\mathop{\cap}\limits_{\Phi}}

\newcommand{\dfar}{{\not\delta}_{\Phi}}

\newcommand{\snd}{\mathop{\delta_{_{\Phi}}}\limits^{\doublewedge}} 


\newcommand{\cl}{\mbox{cl}}


\begin{document}

\title[Descriptive Proximities I ]{Descriptive Proximities I:\\ Properties and interplay between \\ classical proximities and overlap}

\author[A. Di Concilio]{A. Di Concilio$^{\alpha}$}
\author[C. Guadagni]{C. Guadagni$^{\alpha}$}
\address{\llap{$^{\alpha}$\,}
Department of Mathematics, University of Salerno, via Giovanni Paolo II 132, 84084 Fisciano, Salerno , Italy}
\email{diconci@unisa.it,cguadagni@unisa.it}
\author[J.F. Peters]{J.F. Peters$^{\beta}$}
\address{\llap{$^{\beta}$\,}Computational Intelligence Laboratory,
University of Manitoba, WPG, MB, R3T 5V6, Canada and
Department of Mathematics, Faculty of Arts and Sciences, Ad\i yaman University, 02040 Ad\i yaman, Turkey}
\email{James.Peters3@umanitoba.ca}
\author[S. Ramanna]{S. Ramanna$^{\gamma}$}
\address{\llap{$^{\gamma}$\,}Applied Computer Science, University of Winnipeg, MB R3B 2E9, Canada}
\email{s.ramanna@uwinnipeg.ca}
\thanks{The research has been supported by the Natural Sciences \&
Engineering Research Council of Canada (NSERC) discovery grants 185986, 194376 
and Instituto Nazionale di Alta Matematica (INdAM) Francesco Severi, Gruppo Nazionale per le Strutture Algebriche, Geometriche e Loro Applicazioni grant 9 920160 000362, n.prot U 2016/000036.}

\subjclass[2010]{Primary 54E05 (Proximity); Secondary 37J05 (General Topology)}

\date{}

\dedicatory{Dedicated to the Memory of Som Naimpally}

\begin{abstract}
The theory of descriptive nearness is usually adopted when dealing with sets that share some common properties even when the sets are not spatially close, {\em i.e.}, the sets have no members in common.  Set description results from the use of probe functions to define feature vectors that describe a set and the nearness of sets is given by their proximities.  A probe on a non-empty set $X$ is a real-valued function $\Phi: X \rightarrow \mathbb{R}^n$, where $\Phi(x)= (\phi_1(x),.., \phi_n(x))$. We establish a connection between relations on an object space $X$ and relations on the feature space $\Phi(X).$  Having  as starting point the Peters proximity, two sets are \emph{descriptively near}, if and only if their descriptions intersect.  In this paper, we construct a theoretical approach to a more visual form of proximity, namely, descriptive proximity, which has a broad spectrum of applications. We organize descriptive proximities on two different levels: weaker or stronger than the Peters proximity. We analyze the properties and interplay between descriptions on one side and classical proximities and overlap relations on the other side.
\end{abstract}

\keywords{Proximity, Descriptive Proximity, Probes, Strong Proximity, Overlap}

\maketitle
 
\section{Introduction}
This article carries forward recent work on proximities~\cite{Peters2015VoronoiAMSJ,Peters2015visibility,PetersGuadagni2015stronglyNear,PetersGuadagniTop,Guadagni2015}.  

\begin{figure}[!ht]
\centering
\subfigure[\footnotesize\bf Very Near Colour Sets]
 {\label{fig:veryNear}\includegraphics[width=20mm]{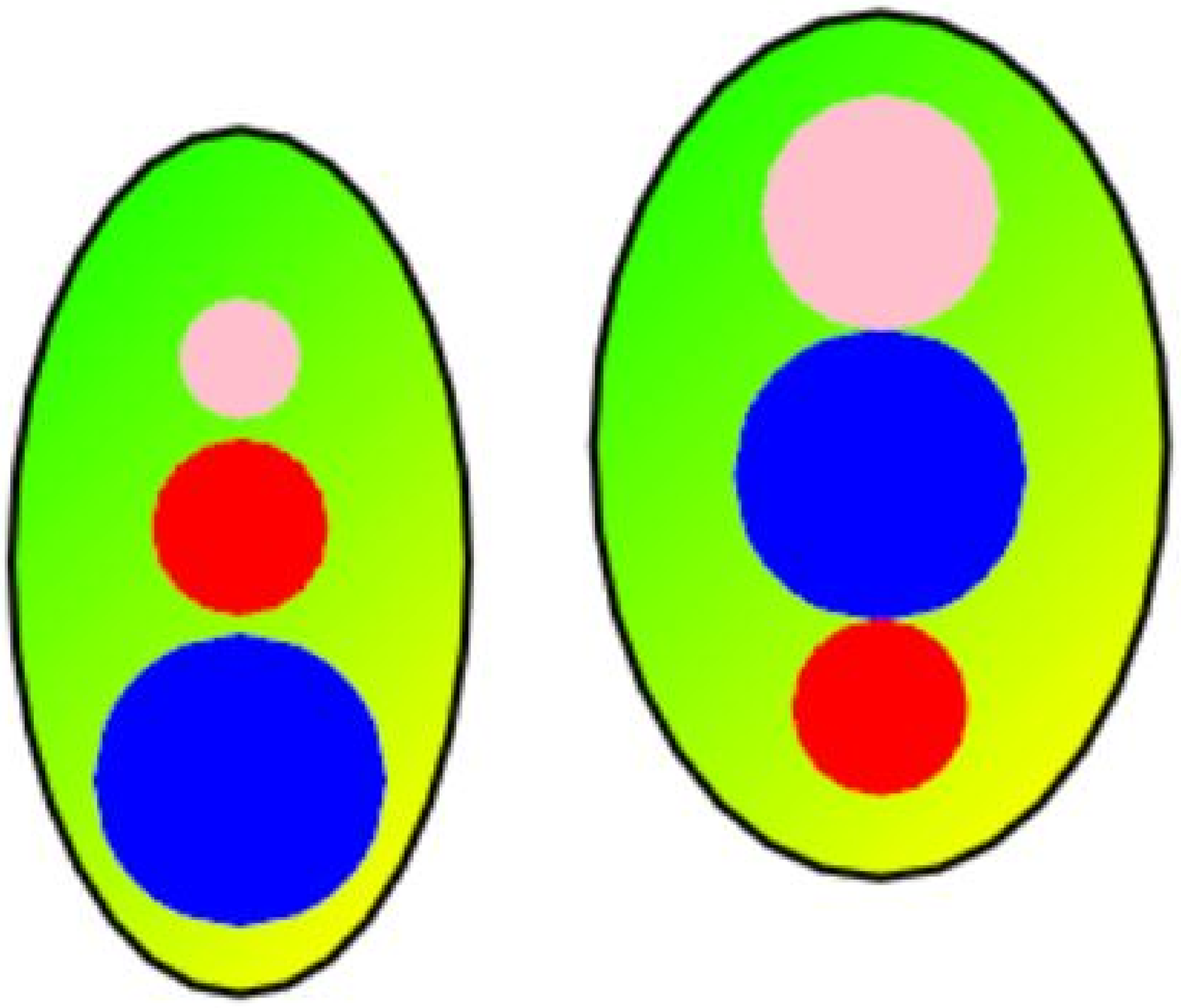}}\hfil
\subfigure[\footnotesize\bf Min. Near Colour Sets]
 {\label{fig:minNear}\includegraphics[width=20mm]{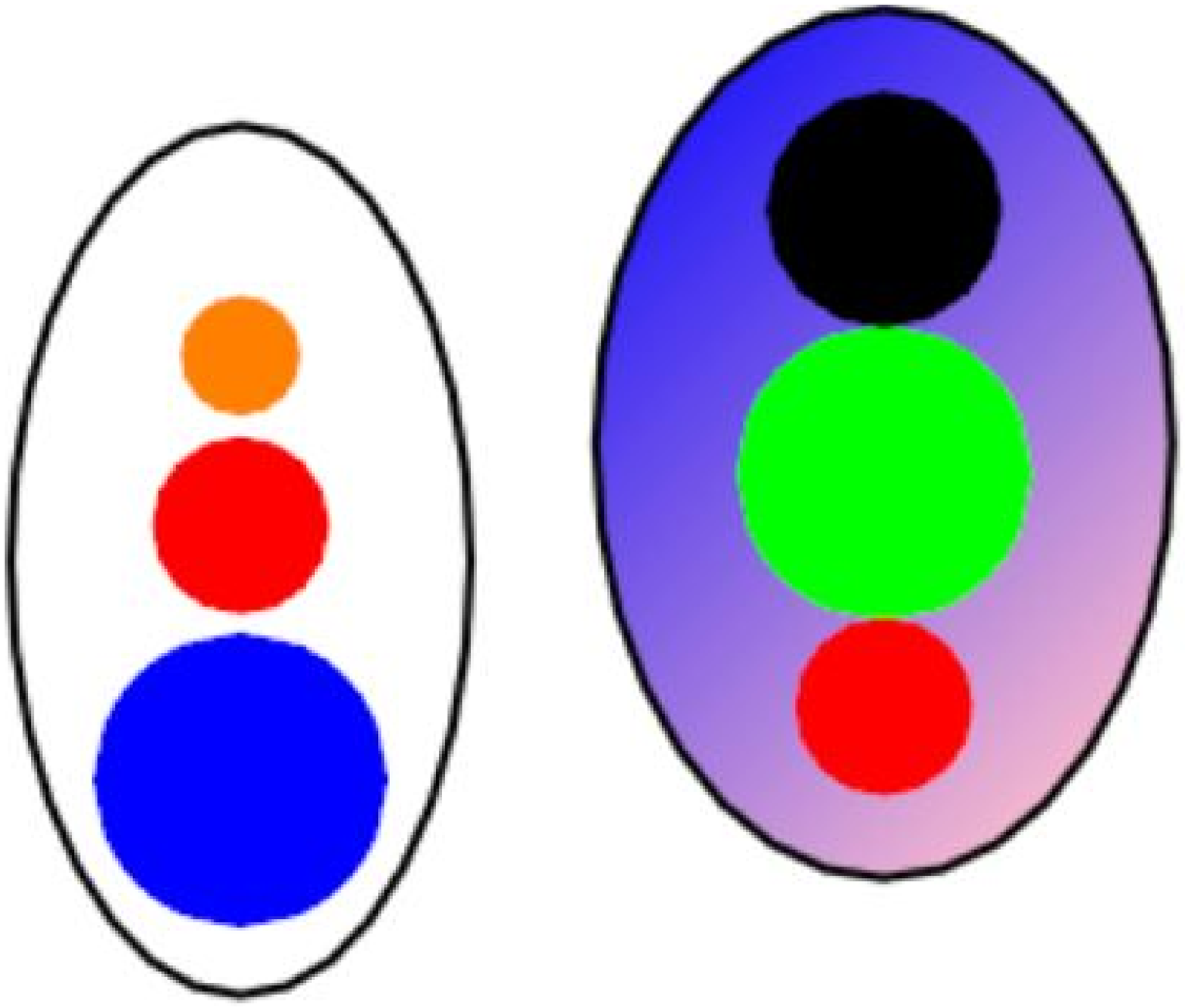}}\hfil
\subfigure[\footnotesize\bf Very Near Grey Sets]
 {\label{fig:veryNearGrey}\includegraphics[width=20mm]{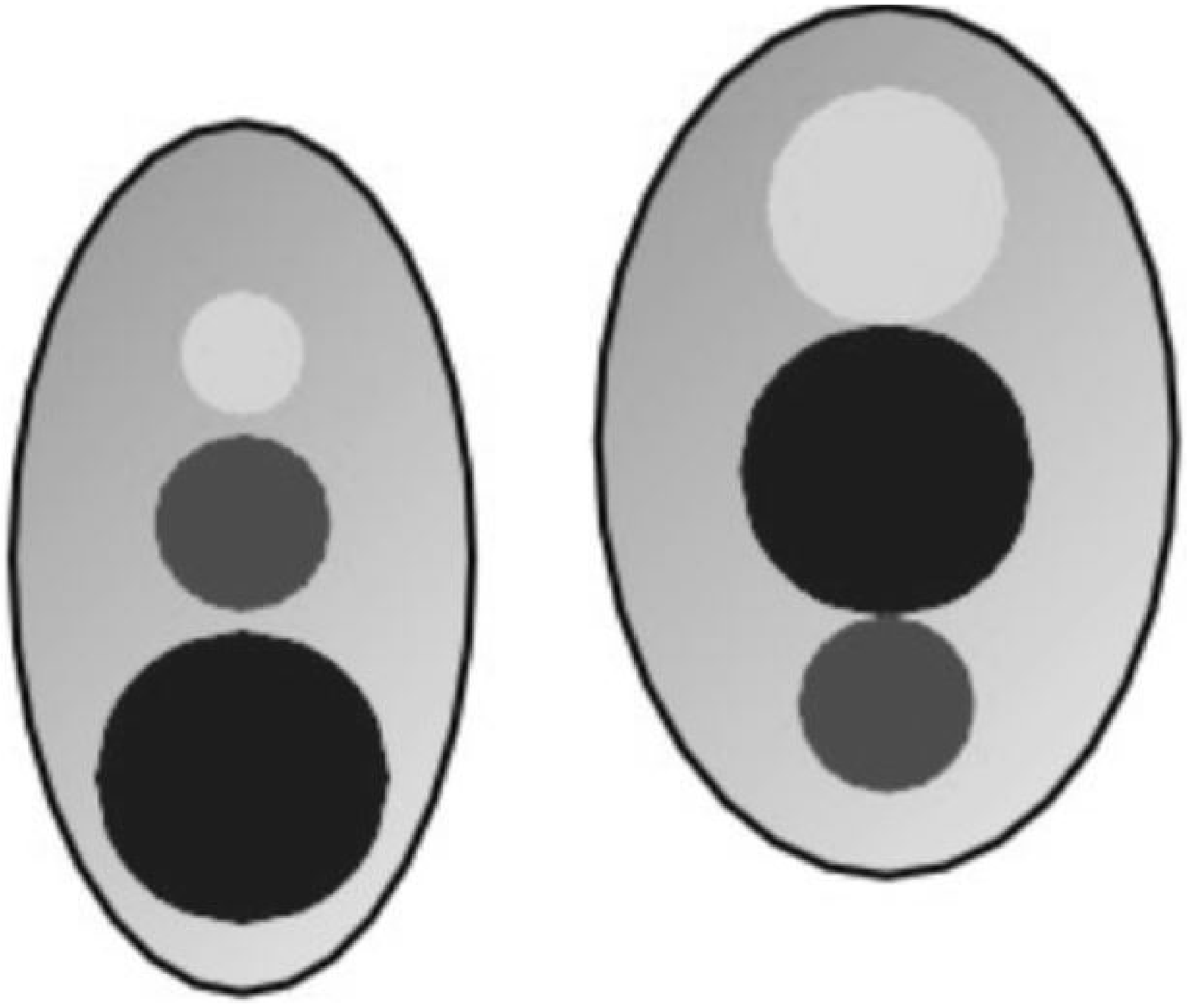}}\hfil
\subfigure[\footnotesize\bf Min. Near Grey Sets]
 {\label{fig:minNearGrey}\includegraphics[width=20mm]{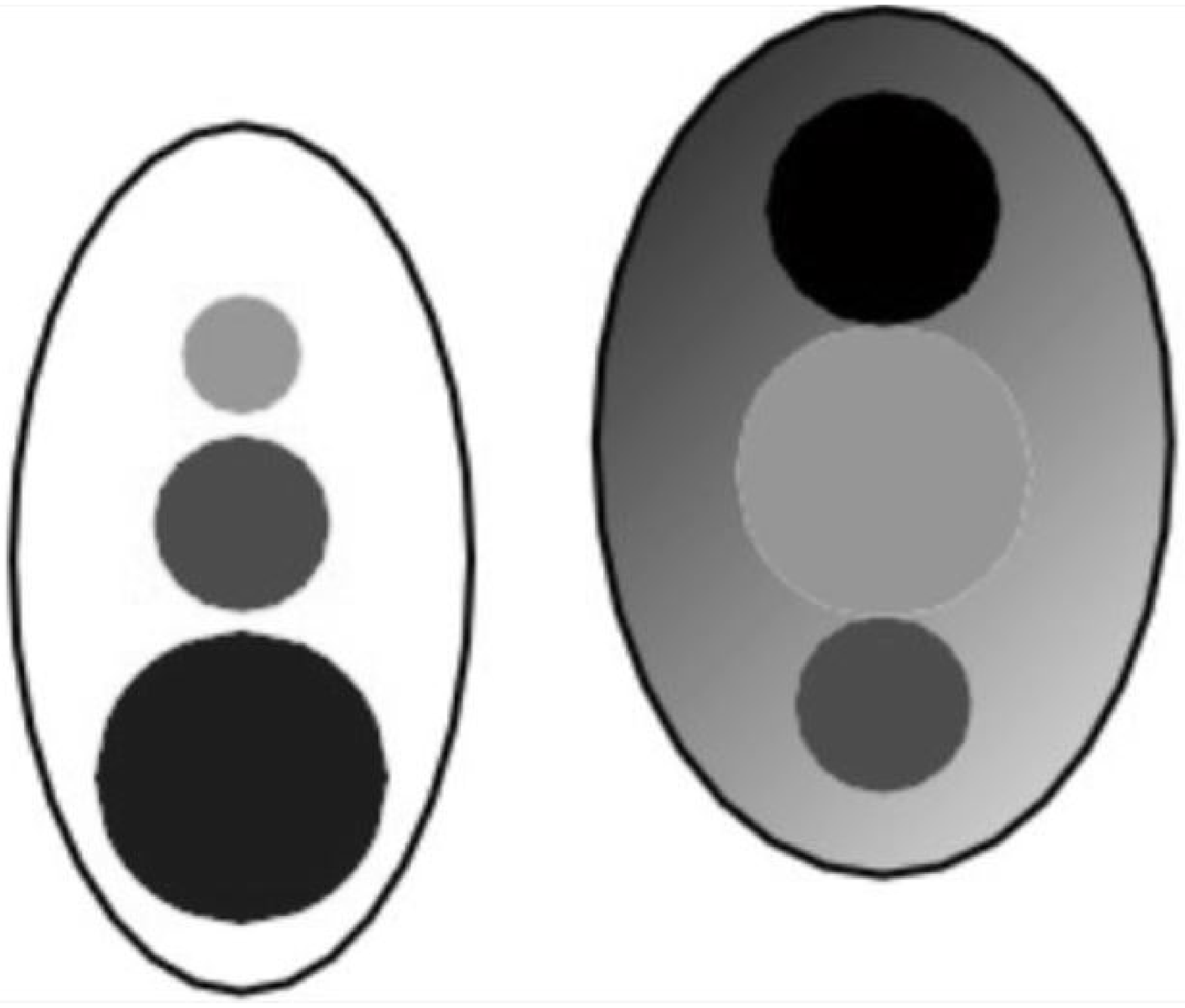}}
\caption[]{Descriptively near sets via colour or greyscale intensity}
\label{fig:nearSets}
\end{figure}
$\mbox{}$\\
\vspace{2mm}

Pivotal in this paper is the notion of a probe used to represent descriptions and proximities. 
A \emph{probe} on a non-empty set $X$ is real-valued function $\Phi: X \rightarrow \mathbb{R}^n$, where $\Phi(x)= (\phi_1(x),.., \phi_n(x))$ and each $\phi_i$ represents the measurement of a particular feature of an object $x\in X$~\cite{Peters2007AMS} (see also~\cite{Pavel1993})  . We establish a connection between relations on the object space $X$ and relations on  the feature space $\Phi(X).$  Usually, probe functions describe or codify physical features and act like "sensors" in extracting characteristic feature values from the objects.
The theory of descriptive nearness \cite{Peters2014} is usually adopted when dealing with subsets that share some common properties even though the subsets are not spatially close.

Each pair of ovals in Fig.~\ref{fig:veryNear} and Fig.~\ref{fig:minNear} contain circular-shaped coloured segments.  Each segment in the ovals corresponds to an equivalence class, where all pixels in the class have matching descriptions, {\em i.e.}, pixels with matching colours. 
For the ovals in Fig.~\ref{fig:veryNear} and Fig.~\ref{fig:minNear}, we observe that the sets are not spatially \emph{near,} but they can be considered near viewed in terms of colour intensities.  Again, for example, the ovals in Fig.~\ref{fig:veryNearGrey} and Fig.~\ref{fig:minNearGrey} contain segments that correspond to equivalence classes containing pixels with matching greyscale intensities.
The ovals in  Fig.~\ref{fig:veryNearGrey} and Fig.~\ref{fig:minNearGrey}
are descriptively near sets, since the equivalence classes contain matching greylevels.  Moreover,  we can also tell if they are more or less \emph{near.} In the sequel, we will express these ideas of resemblance in mathematical terms.

We talk about \emph{non-abstract points} when points have locations and features that can be measured.  The description-based theory is particularly relevant when we want to focus on some distinguishing characteristics of sets of non-abstract points. For example, if we take a picture element $x$ in a digital image, we can consider graylevel intensity or colour of $x$.
In general, we   define as a  \emph{probe}  an $n$ real valued  function $\Phi: X \rightarrow \mathbb{R}^n$, where $\Phi(x)= (\phi_1(x),.., \phi_n(x))$ and each $\phi_i$ represents the measurement of a particular feature. So, $\Phi(x)$ is a feature vector containing numbers representing feature values extracted from $x.$ And
$\Phi(x)$ is also called \emph{description} or \emph{codification} of $x$.  
Of course, nearness or apartness depends essentially on the selected features that are compared.

J.F. Peters~\cite[\S 1.19]{Peters2014} made the first fusion of description with proximity by introducing the notion of descriptive intersection of two sets:

\[ A \dcap B = \{x \in A \cup B: \Phi(x) \in \Phi(A), \ \Phi(x) \in \Phi(B) \}\]
and by declaring two sets  \emph{descriptively near}, if and only if their descriptive intersection is non empty or equivalently, if and only if their descriptions intersect. That is the first step in passing from the classical spatial proximity to the more visual descriptive proximity. The new point of view is a really different approach to proximity which has a broad spectrum of applications. The Peters proximity, which we will denote as $\pi_\phi,$ is the $\Phi-$pullback of the set-intersection.   By replacing the set-intersection with the descriptive intersection, we construct a theoretical approach  to the more visual form of proximity, namely, descriptive proximity (denoted by $\dnear$). We organize descriptive proximities in two different levels: weaker $  (A \delta{_\Phi} B \ \Rightarrow \   A \dcap B \neq \emptyset)$  or stronger  $( A \dcap B \neq \emptyset  \Rightarrow  A \delta{_\Phi} B) $ than the Peters proximity. In both cases, we find a natural underlying topology.  
That is, descriptive intersection
can be analyzed from the following two different perspectives: as the finest
 classical proximity, the discrete proximity, but also as the weakest overlapping relation, we exhibit significant examples of descriptive proximities weaker than the Peters proximity by following two different options: the proximal approach and the overlapping approach.

\subsection{Background of classical proximities}  
\

\noindent We draw our reference from  Naimpally-Di Concilio~\cite{DiConcilio2009,Naimpally1970} and are essentially interested in the simplest example of proximities, namely, Lodato proximities~\cite{Lodato1962,Lodato1964,Lodato1966} which guarantee the existence of a natural underlying topology.

\begin{definition}[Lodato]
Let $X$ be a nonempty set. A \textit{Lodato proximity $\delta$} is a relation on $\mathscr{P}(X)$, the collection of all subsets of $X,$ which satisfies the following properties for all subsets $A, B, C $ of $X$ :
\begin{description}
\item[$P_0)$] $A\ \delta\ B \Rightarrow A \neq \emptyset $ and $B \neq \emptyset $
\item[$P_1)$] $A\ \delta\ B \Leftrightarrow B\ \delta\ A$
\item[$P_2)$] $A \cap B \neq \emptyset \Rightarrow  A\ \delta\ B$
\item[$P_3)$] $A\ \delta\ (B \cup C) \Leftrightarrow A\ \delta\ B $ or $A\ \delta\ C$
\item[$P_4)$] $A\ \delta\ B$ and $\{b\}\ \delta\ C$ for each $b \in B \ \Rightarrow A\ \delta\ C$
\end{description}
Further $\delta$ is \textit{separated }, if 
\begin{itemize}
\item[$P_5)$] $\{x\}\ \delta\ \{y\} \Rightarrow x = y$.
\end{itemize}
\end{definition}

\noindent When we write $A\ \delta\ B$, we read "$A$ is near to $B$", while when we write $A \not \delta B$ we read "$A$ is far from $B$".  A relation  $\delta$ which  satisfies  only  $P_0)-P_3)$ is called a \emph{\u Cech}~\cite{Cech1966} or \emph{basic proximity}.
  
\vspace{2mm}

\noindent With any basic  proximity  one can associate a closure operator, $ \bf{\cl_\delta},$ by defining as closure of  any subset $A$ of $X:$
\[
\mbox{cl}_\delta A = \{ x \in X: \{x\} \ \delta\ A\}.
\]

\begin{definition}
An \emph{EF-proximity}~\cite{Efremovich1951,Efremovich1952} is a relation on $\mathscr{P}(X)$ which satisfies $P_0)$ through $P_3)$ and in addition the property:
\[ (EF) \ \ A \not\delta B \Rightarrow \exists E \subset X \hbox{ such that } A \not\delta E \hbox{ and } X\setminus E \not\delta B \]

which can be formulated  equivalently as:
\[ (EF1) \ \ A \not\delta B \Rightarrow \exists C, \ D  \subset X, \ C \cup D = X \hbox{ such that } A \not\delta C \hbox{ and } D \not\delta B .\]
\end{definition}
 
\noindent Since the EF-property is  stronger than the Lodato property, every EF-proximity is indeed a Lodato proximity.

\vspace{2mm}

\noindent The following remarkable properties reveals the potentialities of Lodato proximity. When $\delta$ is a Lodato proximity, then:

\begin{compactenum}[{\bf Property}.1] 
\item  The associated closure operator $\bf{\cl_\delta}$ is a Kuratowski operator~\cite{Kuratowski1958,Kuratowski1961}. Hence, every Lodato proximity space $(X, \delta)$ determines  an associated topology $\tau(\delta)$ whose closed sets are just the subsets which agree with their own closures.
\item  Furthermore, for each subsets  $A, B$ :
\begin{center} $A \ \delta \ B  \ \Longleftrightarrow \ \cl_{\delta} A \ \delta \ \cl_{\delta} B.$
\end{center}
\end{compactenum}

If $(X, \tau )$ is a topological space, we say that it admits a compatible Lodato proximity if there is a Lodato proximity $\delta$ on $X$ such that $\tau = \tau(\delta)$. A question arises when a topological space has a compatible Lodato proximity. This happens when the space satisfies the \emph{$R_0$-separation property}, i.e. $x \in \cl\{y\} \Leftrightarrow y \in \cl\{x\}$. In fact, every $R_0$ topological space $(X, \tau)$ admits as a compatible Lodato proximity $\delta_0$ given by: \

\label{def.fineLo}
\[ A \ \delta_0 \ B \Leftrightarrow \cl A \cap \cl B \neq \emptyset. \ \hbox{(Fine Lodato proximity~\cite{Naimpally1970})} \] 
\

On the other hand, a topological space has a compatible EF-proximity if and only if it is a completely regular topological space \cite{DiConcilio2009, Willard1970}.   Recall that a topological space is completely regular iff whenever $A$ is a closed set and $x \not\in A$, there is a continuous function 
$f: X \rightarrow [0,1]$ such that $f(x)=0$ and $f(A)=1$ \cite{Willard1970}.

\vspace{2mm}
$\bullet$ Any Lodato $T_1$ ( EF + $T_2$) proximity becomes \emph{spatial} by a $T_1$ ($T_2$) compactification procedure.\
\
\

\begin{figure}[!ht]
\centering
\includegraphics[width=25mm]{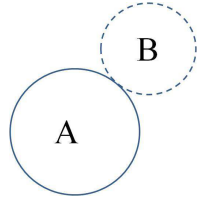}
\caption[]{Overlapping Sets:$A\ \delta\ B$}
\label{fig:fineLo}
\end{figure}

\subsection{ Examples}

\begin{example}
Consider $\mathbb{R}^2$ endowed with the Euclidean topology and the sets in Fig.~\ref{fig:fineLo}. $A$ is an open disk while $B$ is a closed disk. They are near in the fine Lodato proximity but they are far in the discrete proximity.
\qquad \textcolor{blue}{$\blacksquare$}
\end{example}

\begin{example}\label{ex:discreteProximity} {\bf Discrete Proximity on a Nonempty Set}.\\
Let $A,B\subset X$.  For a discrete proximity relation between $A$ and $B$, we have $A \ \delta \ B \Leftrightarrow \ A \cap B \neq \emptyset$.  This discrete proximity is a separated EF-proximity~[\S 2.1]\cite{DiConcilio2009}.
\qquad \textcolor{blue}{$\blacksquare$}
\end{example}

\noindent $\bullet$ From a spatial point of view, proximity appears as a generalization of the set-intersection.  The discrete proximity from Example~\ref{ex:discreteProximity} gives rise to a discrete topology.

\noindent $\bullet$ A pivotal EF-proximity is the \emph{metric proximity} $\delta_d$ associated with a metric space $(X,d)$ defined by considering the gap between two sets in a metric space 
( $d(A,B) = \inf \{d(a,b): a \in A, b \in B\}$ or $\infty$ if $A$ or $B$ is empty )
and by putting:
\[
A \ \delta_d \ B \Leftrightarrow  d(A, B) =0 .
\]

That is, $A$ and $B$  are $\delta_{d}-$near iff they  {\it either intersect or are asymptotic}: for each natural number $n$ there  is a  point  $a_n$ in $A $ and a point  $b_n$ in $B$ such that $d(a_{n},b_{n}) < \frac{1}{n}$.\\

\noindent $\bullet$ \emph{Fine Lodato proximity} $\delta_0$ on a topological space is defined as follows:

\[ A \ \delta_0 \ B \Leftrightarrow \cl A \cap \cl B \neq \emptyset.  \]

The proximity $\delta_0$ is the finest Lodato proximity compatible with a given topology.

\noindent $\bullet$ \emph{Functionally indistinguishable proximity} $ \delta_F$  on a completely regular space~\cite[\S 2.1,p.94]{DiConcilio2009}.

\begin{center}  { $ A \ \not\delta_{F}  \ B \Leftrightarrow  $ there is a continuous function  $ f: X \rightarrow [0,1] : \ f(A)=0 , \ f(B)= 1. $ }
\end{center}

\noindent The functionally indistinguishable  proximity on a completely regular space $X$  is  an EF-proximity, which is  further the finest EF-proximity compatible with $X.$  Moreover, $\delta_F$ coincides with the fine Lodato proximity if and only if $X$ is normal.

\begin{figure}[!ht]
 \centering
  \subfigure[EF relation]{\label{fig:EFsets}\includegraphics[width=25mm]{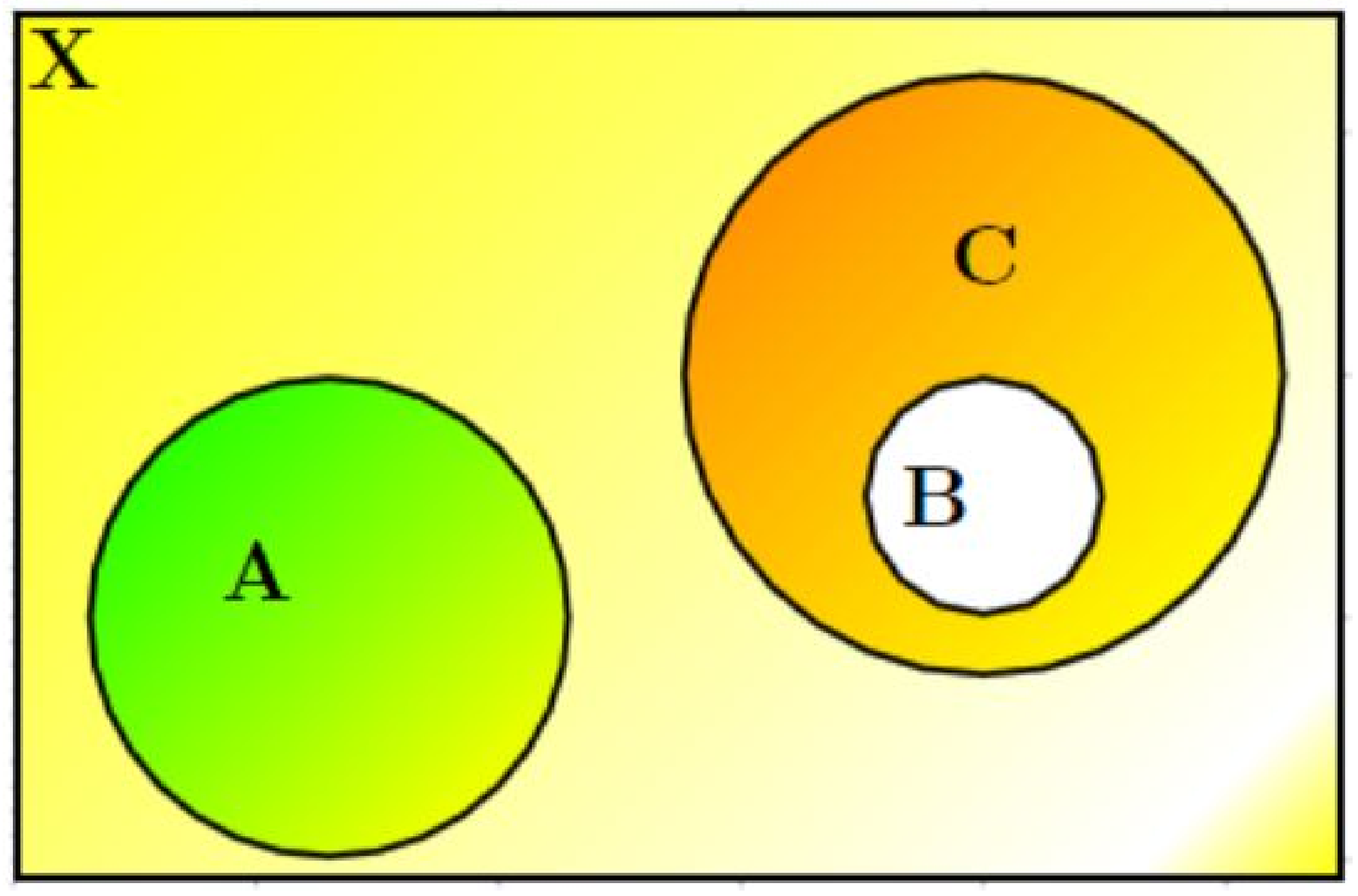}}\hfil
  \subfigure[EF display]{\label{fig:ef}\includegraphics[width=25mm]{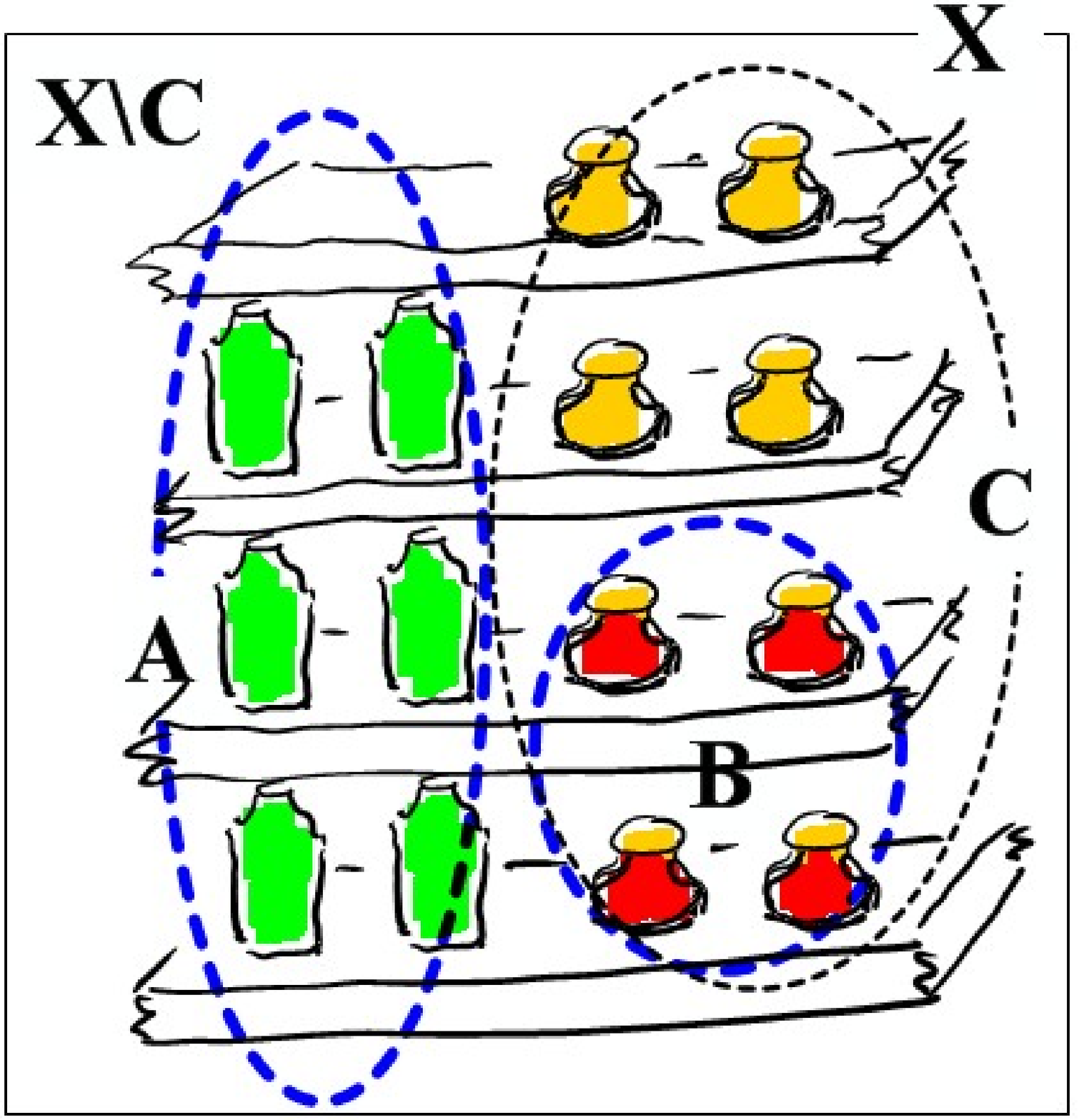}}\hfil
  \subfigure[Thai display]{\label{fig:thai}\includegraphics[width=25mm]{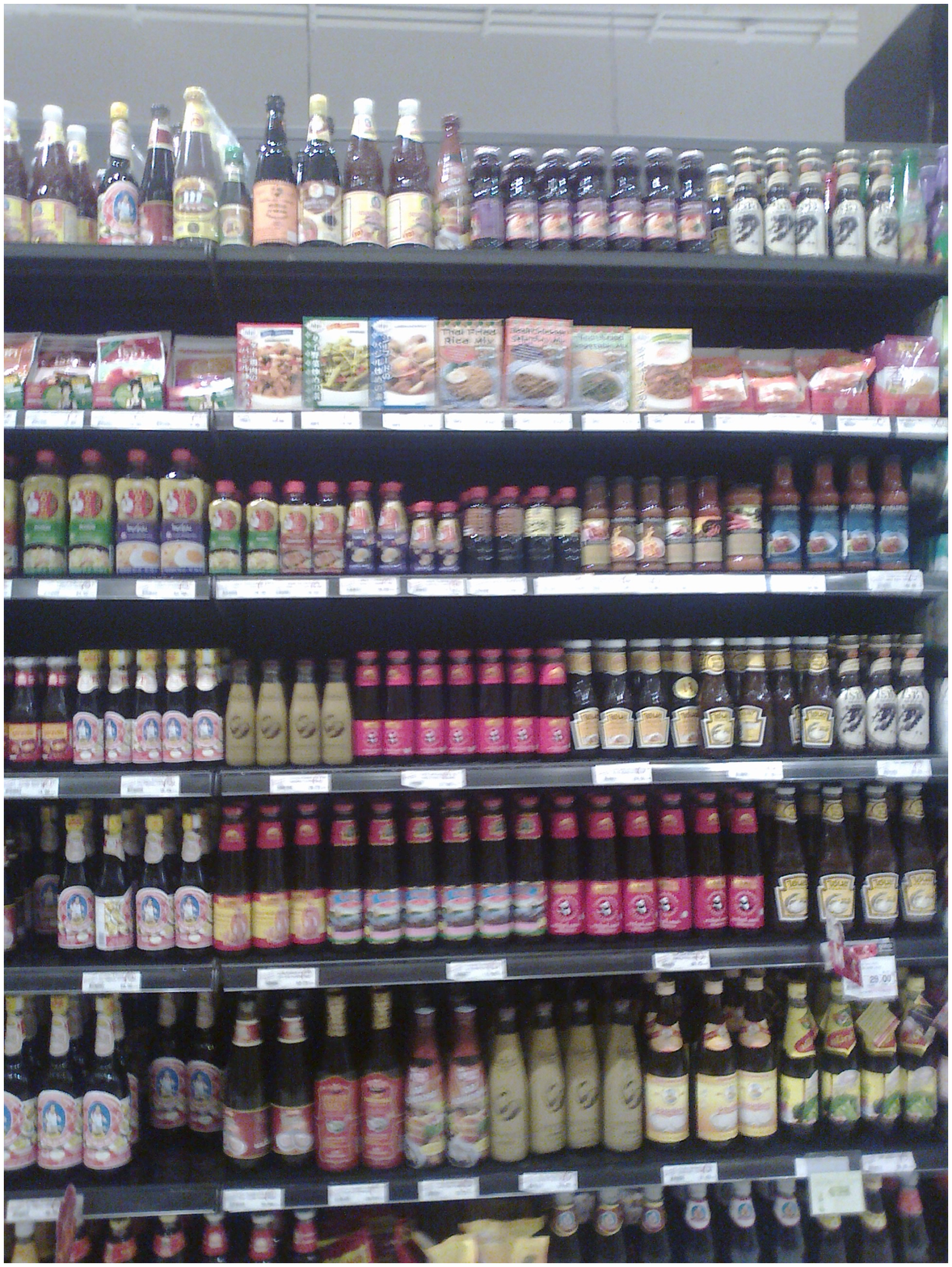}}
 \caption{Sample EF-Relationships}
 \label{fig:beh0} 
\end{figure}

\begin{example}{\bf Descriptive EF Proximity Relation~\cite{ Naimpally2013}}.\\
Let $A,C\subset X, B\subset C$ and let $C^c$ be the compliment of $C$.  A descriptive EF proximity (denoted by $\dfar$) has the following property:
\[
\boldsymbol{A}\ \dfar\ \boldsymbol{B}\Leftrightarrow \mathbf{A}\ \dfar\ \mathbf{C}\ \mbox{and}\ \mathbf{B}\ \dfar\ \mathbf{C^c}.
\]
A representation of this descriptive EF proximity relation is shown in Fig.~\ref{fig:EFsets}.  
The import of an EF-proximity relation is extended rather handily to visual displays of products in a supermarket (see, {\em e.g.}, Fig.~\ref{fig:thai}).  The sets of bottles that have an underlying EF-proximity to each other is shown conceptually in the sets in Fig.~\ref{fig:ef}.  The basic idea with this application of topology is to extend the normal practice in the vertical and horizontal arrangements of similar products with a consideration of the topological structure that results when remote sets are also taken into account, representing the relations between these remote sets with an EF-proximity.
\qquad \textcolor{blue}{$\blacksquare$}
\end{example}

\bigskip
\subsection{Strong inclusion}
\

Any proximity $\delta$ on $X$ induces a binary relation over the powerset $\mathscr{P}(X)$, usually denoted as $\ll_\delta$ and  named  the  {\it natural strong inclusion associated with } $\delta,$ by declaring that $ A$ is {\it strongly included} in $B, \ A \ll_{\delta} B,$ when $A$ is far from the complement of $B,\mbox{\emph{i.e.}}, A \not\delta X\setminus B$~\cite{DiConcilio2009}.
In terms of  strong inclusion associated with an EF-proximity  $\delta$, the \textit{Efremovi\v c property} for $\delta$ can be formulated  as the betweenness property:   \\

 \centerline {  (EF2) \  \   \  \  If $A \ll_{\delta} B,$  then there exists some $C$ such that $A \ll_{\delta} \  C \ll_{\delta} \ B$.} \

We conclude by emphasizing that a topological structure is based on the nearness between points and sets and a function between topological spaces is continuous provided it preserves nearness between points and sets, while a function between two proximity spaces is \emph{proximally continuous}, provided it preserves nearness between sets. Of course, any proximally continuous function is continuous with respect to the underlying topologies.
 
\section{Descriptive intersection and Peters proximity}
J.F. Peters made the first fusion of description with proximity, so passing from the classical spatial proximity to the recent more visual descriptive proximity which has a broad spectrum of applications~\cite{Peters2014,Peters2015AMSJmanifolds,Peters2012ams,PetersHettiarachchi2014manifolds,Peters2016Computational,PetersGuadagniTop}

\setlength{\intextsep}{0pt}

\begin{wrapfigure}[11]{R}{0.45\textwidth}
\begin{minipage}{5.5 cm}
\centering
\includegraphics[width=50mm]{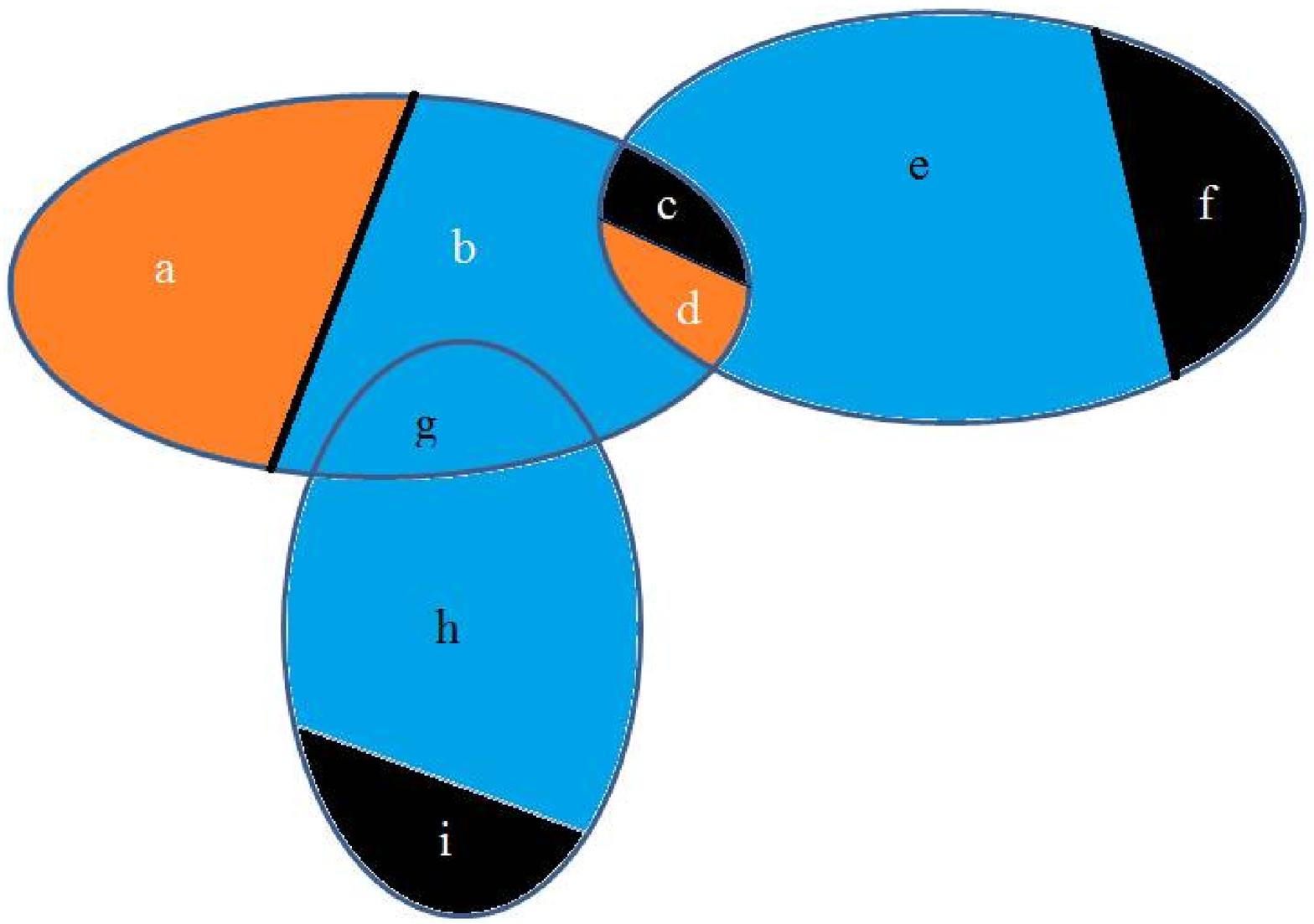}
\caption[]{$\mbox{}$\\ $\gamma_\Phi\neq\delta_{\Phi}$}
\label{fig.intersez}
\end{minipage}
\end{wrapfigure}

The starting idea is that two sets are near when the feature-values differences are so small so that they can be considered indistinguishable. He introduced the   notion  of descriptive intersection  which,  playing a similar role of set-intersection in the classical case, is crucial in our recent project to approach new forms of descriptive proximities. The mixture 
of description with proximity reveals an advantageous contamination.

The descriptive intersection of two sets $A, B$ is nonempty, provided there is at least one element in $A$ with a description that matches the description of at least one element in $B.$  The sets $A, B$ cannot share any point in common but
they can have a  nonempty descriptive intersection.

\begin{example}
Let $X$ be $\mathbb{R}^2$ and $\Phi: \mathbb{R}^2 \rightarrow \mathbb{R}^3$ be a probe that associate to each point its RGB-color. In Fig.~\ref{fig.intersez}, consider sets $A, B, C$ and their subsets $a, b, c, d, e, f, g ,h, i$. Observe that $\Phi(A) \cap \Phi(B)$ is given by colors black and red, so $ \Phi^{-1}(\Phi(A) \cap \Phi(B))= a \cup c \cup d \cup f \cup i$ and $A \dcap B = \Phi^{-1}(\Phi(A) \cap \Phi(B)) \cap (A \cup B)= a \cup c \cup d \cup f$. Then $A \dcap B \supseteq A \cap B = c \cup d$ and $A \dcap B \subseteq A \cup B$.
\qquad \textcolor{blue}{$\blacksquare$}
\end{example}

The first natural descriptive proximity, which  we decided to call Peters proximity and to denote as $\pi_\Phi$, declares two sets  \emph{descriptively near}   iff their descriptions intersect. Or in other words:
  
 \noindent Let $X$ be a non-empty set, $A$ and $B$ be subsets of $X,$ and $\Phi: X \rightarrow \mathbb{R}^n$ be a probe, then:
  $A \ \pi_{\Phi} \ B \Leftrightarrow \Phi(A) \cap \Phi(B) \neq \emptyset$.
\noindent namely, Peters proximity $\bf{\pi_\Phi}$, which is the $\Phi-$pull back of the discrete proximity.
 
\begin{theorem}   Peters proximity is  an  Efremovi\u c proximity, whose underlying topology is  $R_0$ and Alexandroff. Furthermore, $\bf{\pi_\Phi} $ is $T_0$, then $T_2,$ iff the probe $\Phi$  is injective.
\end{theorem}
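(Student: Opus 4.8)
The plan is to verify the Efremovi\v{c} axioms directly from the pull-back description of $\pi_\Phi$, and then to extract every topological property from a single closed-form expression for the induced closure operator.

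First I would dispatch the basic axioms $P_0)$--$P_3)$. Non-triviality ($P_0$) is clear since $\Phi(\emptyset)=\emptyset$; symmetry ($P_1$) and $P_2)$ are immediate from the symmetry of intersection together with $x\in A\cap B\Rightarrow\Phi(x)\in\Phi(A)\cap\Phi(B)$; and $P_3)$ follows from the identity $\Phi(B\cup C)=\Phi(B)\cup\Phi(C)$, whence $\Phi(A)\cap\Phi(B\cup C)=\bigl(\Phi(A)\cap\Phi(B)\bigr)\cup\bigl(\Phi(A)\cap\Phi(C)\bigr)$. The substantive point is the $(EF)$ property. Here I would \emph{not} try to transport $(EF)$ through the pull-back, since taking preimages does not commute with complementation and so $(EF)$ is not automatically inherited from the discrete proximity on $\Phi(X)$; instead I would exhibit the separating set explicitly. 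Assuming $\Phi(A)\cap\Phi(B)=\emptyset$, set $E\assign X\setminus\Phi^{-1}(\Phi(A))$. Then $\Phi(E)\subseteq\mathbb{R}^n\setminus\Phi(A)$ gives $A\not\pi_\Phi E$, while $\Phi(X\setminus E)=\Phi\bigl(\Phi^{-1}(\Phi(A))\bigr)=\Phi(A)$ is disjoint from $\Phi(B)$, giving $(X\setminus E)\not\pi_\Phi B$; this is exactly $(EF)$. Since the Efremovi\v{c} property is stronger than the Lodato property, Property~1 then furnishes a compatible topology $\tau(\pi_\Phi)$.

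The second step is to compute the closure. By definition $\cl_{\pi_\Phi}A=\{x:\{x\}\ \pi_\Phi\ A\}=\{x:\Phi(x)\in\Phi(A)\}=\Phi^{-1}(\Phi(A))$; in particular $\cl_{\pi_\Phi}\{y\}$ is the fibre $\{z:\Phi(z)=\Phi(y)\}$. The $R_0$ property is then transparent: $x\in\cl_{\pi_\Phi}\{y\}\Leftrightarrow\Phi(x)=\Phi(y)\Leftrightarrow y\in\cl_{\pi_\Phi}\{x\}$. For the Alexandroff property I would observe that a set is closed exactly when it is saturated for the equivalence $x\sim y\Leftrightarrow\Phi(x)=\Phi(y)$, i.e. a union of fibres; arbitrary unions and intersections of saturated sets are again saturated, so arbitrary intersections of open sets are open, which is precisely the Alexandroff condition. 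Equivalently, each $x$ owns the minimal open neighbourhood $\Phi^{-1}(\{\Phi(x)\})$.

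Finally I would settle the separation chain. From the closure formula, $\cl_{\pi_\Phi}\{x\}=\cl_{\pi_\Phi}\{y\}\Leftrightarrow\Phi(x)=\Phi(y)$, so the assignment $x\mapsto\cl_{\pi_\Phi}\{x\}$ is injective precisely when $\Phi$ is injective; since a space is $T_0$ iff distinct points have distinct closures, $\pi_\Phi$ is $T_0$ iff $\Phi$ is injective. When $\Phi$ is injective every fibre is a singleton, hence every subset is saturated and $\tau(\pi_\Phi)$ is the discrete topology, which is $T_2$. As the implications $T_2\Rightarrow T_0\Rightarrow\Phi$ injective always hold, the three conditions collapse to one. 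The only delicate point in the whole argument is the explicit construction of $E$ for $(EF)$; once the closure is identified as $\Phi^{-1}(\Phi(\cdot))$, the remaining claims are formal consequences of this partition-topology structure.
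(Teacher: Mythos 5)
Your argument is correct, and it is worth noting that the paper itself states this theorem without any proof at all; the closest thing it offers is the Proposition immediately following, which characterizes the closed sets of $\tau(\pi_\Phi)$ as the $\Phi$-saturated sets via exactly the chain of equivalences you use, namely $x\in \cl_{\pi_\Phi}(A)\Leftrightarrow \Phi(x)\in\Phi(A)\Leftrightarrow x\in\Phi^{-1}(\Phi(A))$. So your second and third steps (the $R_0$, Alexandroff, and $T_0/T_2$ claims read off from the partition topology generated by the fibres $\Phi^{-1}(\Phi(x))$) are in complete harmony with the machinery the authors develop, and your observation that injectivity of $\Phi$ forces the discrete topology is the clean way to collapse $T_0$ and $T_2$ into a single condition. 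The one genuinely substantive contribution of your write-up, which the paper never supplies, is the explicit Efremovi\v{c} witness $E=X\setminus\Phi^{-1}(\Phi(A))$: your verification that $\Phi(E)\cap\Phi(A)=\emptyset$ and $\Phi(X\setminus E)=\Phi\left(\Phi^{-1}(\Phi(A))\right)=\Phi(A)$ (the latter because $\Phi(A)\subseteq\Phi(X)$) is exactly right, and you are also right to flag that $(EF)$ does not transport formally through the pull-back because preimage and complement interact badly with images. No gaps.
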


Recall that a topological space has the
\emph{ Alexandroff property} iff  any intersection of open sets is in turn open~\cite{Alexandroff1935}.  
 
It is easily seen that we can rewrite the previous definition by using   $\Phi-$saturation of sets.

\begin{remark}
Recall that a set $A$ is called $\Phi-$saturated if and only if $\Phi^{-1}(\Phi(A))= A$.
\qquad \textcolor{blue}{$\blacksquare$}
\end{remark}

\begin{proposition}
Let $X$ be a non-empty set, $A$  be subset of $X$, and $\Phi: X \rightarrow \mathbb{R}^n$ be a probe. Then $A$ is closed in the topology induced by $\pi_\Phi$, $\tau(\pi_\Phi)$, if and only if it is $\Phi-$saturated. Moreover $\tau(\pi_\Phi)$ is disconnected.
\end{proposition}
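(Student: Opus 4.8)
The plan is to reduce both assertions to one explicit computation of the closure operator attached to $\pi_\Phi$. First I would unwind the definition $\cl_\delta A = \{x \in X : \{x\}\ \delta\ A\}$ for the Peters proximity. Since $\{x\}\ \pi_\Phi\ A$ means $\Phi(\{x\}) \cap \Phi(A) \neq \emptyset$, i.e. $\Phi(x) \in \Phi(A)$, this gives the closed form
\[
\cl_{\pi_\Phi} A = \{x \in X : \Phi(x) \in \Phi(A)\} = \Phi^{-1}(\Phi(A)).
\]
By the Kuratowski property recorded above (Property.1), $\cl_{\pi_\Phi}$ is indeed a closure operator, so the closed sets of $\tau(\pi_\Phi)$ are exactly its fixed points, and everything that follows is bookkeeping around this formula.

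For the first assertion, $A$ is closed in $\tau(\pi_\Phi)$ precisely when $\cl_{\pi_\Phi} A = A$, that is when $\Phi^{-1}(\Phi(A)) = A$, which is verbatim the definition of $A$ being $\Phi$-saturated. The inclusion $A \subseteq \Phi^{-1}(\Phi(A))$ is automatic, so the only content is the reverse inclusion; this settles the equivalence.

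For the disconnectedness, the key observation I would make is that the fibres $\Phi^{-1}(y)$, $y \in \Phi(X)$, partition $X$, and a set is $\Phi$-saturated exactly when it is a union of such fibres. Consequently the complement of a $\Phi$-saturated set is again a union of fibres, hence again $\Phi$-saturated; combined with the first part this shows that the closed sets and the open sets of $\tau(\pi_\Phi)$ coincide, so every closed set is clopen. It then suffices to exhibit one proper nonempty clopen set: choosing $x_1, x_2 \in X$ with $\Phi(x_1) \neq \Phi(x_2)$, the single fibre $\Phi^{-1}(\Phi(x_1))$ is nonempty (it contains $x_1$) and proper (it omits $x_2$), hence clopen, yielding the separation $X = \Phi^{-1}(\Phi(x_1)) \cup (X \setminus \Phi^{-1}(\Phi(x_1)))$ into two disjoint nonempty open sets.

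The one step I would flag is the degenerate case of a \emph{constant} probe, where the sole fibre is all of $X$ and $\tau(\pi_\Phi)$ collapses to the indiscrete, hence connected, topology; the disconnectedness conclusion therefore tacitly presumes that $\Phi$ takes at least two distinct values, which is the situation of interest throughout the paper. Beyond isolating this hypothesis, the argument is a routine unwinding of the closure formula, so I do not expect any serious obstacle.
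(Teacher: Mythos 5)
Your proof is correct and follows essentially the same route as the paper: both compute $\cl_{\pi_\Phi} A = \Phi^{-1}(\Phi(A))$ to obtain the saturation characterization, and both establish disconnectedness by showing that a single fibre $\Phi^{-1}(\Phi(x))$ is clopen because its complement is a union of fibres and hence itself $\Phi$-saturated. Your flag about the constant-probe case is a genuine point the paper glosses over --- if $\Phi$ is constant the topology is indiscrete and therefore connected, so the disconnectedness claim does tacitly require $\Phi$ to take at least two distinct values.
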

\begin{proof}
The proof of the first part comes from the following equivalences :$x \in  Cl_{\pi_\Phi}(A) \Leftrightarrow x \ \pi_\Phi \ A  \Leftrightarrow  \ \Phi(x) \in \Phi(A) \Leftrightarrow \ x \ \in \Phi^{-1}(\Phi(A)). $ To see that $\tau(\pi_\Phi)$ is disconnected consider $\Phi^{-1}(\Phi(x))$. This is a closed set being equal to $Cl{\pi_\Phi}(x)$, but at the same time it is open because its complement is given by $\bigcup\{ \Phi^{-1}(\Phi(y)) : \Phi(y) \neq \Phi(x) \}$ and it is closed in its turn being $\Phi-$saturated.
\end{proof}

\bigskip

If we consider the relation on $X$ given by $ x \mathscr{R}_\Phi y \Leftrightarrow \Phi(x) = \Phi(y) $, then we have an equivalence relation whose classes are of type $[x] = \Phi^{-1}(\Phi(x))$, where $x \in X$. So two subsets of $X$, $A$ and $B$, are $\pi_\Phi-$near if and only if they intersect a same class of the partition induced by $\mathscr{R}_\Phi.$

\section{Descriptive proximities}
Peters proximity is a link between nearness or overlapping of descriptions in the codomain $\mathbb{R}^{n}$  with relations on pairs of subsets on the domain of codification.  But Peters proximity $\bf{\pi_\Phi}$ might be considered in some cases too strong or in some other ones too weak. So, by relaxing or stressing $\bf{\pi_\Phi},$ we obtain  general forms of descriptive proximities,  that can work better  than  it in particular settings.  Since, from a spatial point of view, classical  proximity is a generalization of the set-intersection, in our treatment we choose Peters proximity as the unique separation element between two different broad  classes of descriptive proximities. 
If we entrust  the descriptive intersection  with the same role of the set-intersection in the classical case we get the following two options: descriptive intersection versus descriptive proximity, {\em i.e.},
 
\begin{description}
\item[First option: weaker form]
\[A \dcap B \neq \emptyset \Rightarrow  A\ \dnear\ B 
\]
	
\item[Second option: stronger form]
\[
A\ \dnear\ B \Rightarrow A \dcap B \neq \emptyset.
\]
\end{description}
 
 \subsection{ Weaker form} \

 This is the case in which two sets having nonempty descriptive intersection are descriptively near:  $ A \dcap B \neq \emptyset \Rightarrow  A\ \dnear\ B. $  \

Let $X$ be a non-empty set, $A, \ B, \ C $  be subsets of $X$, and $\Phi: X \rightarrow \mathbb{R}^n$ be a probe. The relation $\dnear$ on $\mathscr{P}(X),$  the powerset of $X,$ is a  \emph{\v Cech 
$\Phi-$descriptive proximity} iff the following properties hold:

\begin{description}
\item[$D_0)$] $A\ \dnear\ B \Rightarrow A \neq \emptyset $ and $B \neq \emptyset $
\item[$D_1)$] $A\ \dnear\ B \Leftrightarrow B\ \dnear\ A$
\item[$D_2)$] $A \dcap B \neq \emptyset \Rightarrow  A\ \dnear\ B$
\item[$D_3)$] $A\ \dnear\ (B \cup C) \Leftrightarrow A\ \dnear\ B $ or $A\ \dnear\ C$

\end{description}

\noindent  If, additionally: \

  \[ {D_4):} \ \  A\ \dnear\ B \hbox{ and } \{ b \}\ \dnear\ C \hbox{ for each } b \in B \ \Rightarrow A\ \dnear\ C\]
 holds, then $\dnear $ is a  \emph{\ Lodato $\Phi-$descriptive proximity}~\cite[\S 4.15.2,p.155]{Peters2014}.\

\vspace{3mm}
\noindent Furthermore, if  the following  property holds:

$ \ \ A \not\delta_\Phi B \Rightarrow \exists E \subset X \hbox{ such that } A \not\delta_\Phi E \hbox{ and } X\setminus E \not\delta_{\Phi} B $, then $\dnear $ is an \emph{EF $\Phi-$descriptive proximity.} \

\ We explicitly observe that descriptive axioms  $D_0$ through $D_4$ are formally the same as  in the classical definition with  the set-intersection replaced by the $\Phi-$intersection.

 \subsection{The underlying topology} \
\

As in the classical case, for any descriptive proximity $\delta_\Phi$  and for  each subset $A$ in $X$ we define the  
\emph{$\Phi$-descriptive \  closure} of $A $ as :
\begin{center} $Cl_{\Phi}(A) =: \{ x \in X : x \ \delta_{\Phi}\  A \} $
\end{center}
\begin{theorem} The closure operator  $Cl_{\Phi}$ is a Kuratowski operator iff\ $\delta_\Phi$ is a Lodato  $\Phi-$descriptive proximity.
\end{theorem}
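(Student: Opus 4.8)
The plan is to prove the two implications separately, pairing each of the four Kuratowski axioms with one of the descriptive proximity axioms. Recall that $Cl_\Phi$ is a Kuratowski operator precisely when it satisfies (i) $Cl_\Phi(\emptyset)=\emptyset$, (ii) $A\subseteq Cl_\Phi(A)$, (iii) $Cl_\Phi(Cl_\Phi(A))=Cl_\Phi(A)$, and (iv) $Cl_\Phi(A\cup B)=Cl_\Phi(A)\cup Cl_\Phi(B)$. The single tool throughout is the defining equivalence $x\in Cl_\Phi(A)\Leftrightarrow \{x\}\ \delta_\Phi\ A$, which lets me translate every statement about the operator into a statement about the point-to-set relation.

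For the forward implication I would assume $\delta_\Phi$ is Lodato (axioms $D_0$ through $D_4$) and verify the four closure axioms in turn. Axiom (i) follows from $D_0$, since $\{x\}\ \delta_\Phi\ \emptyset$ would force $\emptyset\neq\emptyset$; axiom (ii) follows from $D_2$, because $x\in A$ gives $x\in\{x\}\cap A\subseteq \{x\}\dcap A$, hence $\{x\}\ \delta_\Phi\ A$; and axiom (iv) follows from $D_3$ together with $D_1$ by unwinding $x\in Cl_\Phi(A\cup B)\Leftrightarrow \{x\}\ \delta_\Phi\ (A\cup B)\Leftrightarrow \{x\}\ \delta_\Phi\ A$ or $\{x\}\ \delta_\Phi\ B$. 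The only substantial step is idempotence (iii): one inclusion is just (ii), and for the reverse I would take $x\in Cl_\Phi(Cl_\Phi(A))$, i.e. $\{x\}\ \delta_\Phi\ Cl_\Phi(A)$, observe that every $b\in Cl_\Phi(A)$ satisfies $\{b\}\ \delta_\Phi\ A$ by definition, and apply $D_4$ with $\{x\}$, $Cl_\Phi(A)$, $A$ playing the roles of $A$, $B$, $C$ to conclude $\{x\}\ \delta_\Phi\ A$, that is $x\in Cl_\Phi(A)$. Thus $D_4$ is exactly what is needed to collapse the double closure.

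For the converse I would assume $Cl_\Phi$ is Kuratowski and try to recover $D_4$. Rewritten in closure language, the hypotheses of $D_4$ are $A\ \delta_\Phi\ B$ together with $B\subseteq Cl_\Phi(C)$, and the goal is $A\ \delta_\Phi\ C$. I would first extract monotonicity of $\delta_\Phi$ in each argument from $D_1$ and $D_3$ (writing $B'=B\cup(B'\setminus B)$), which with (iv) and idempotence gives $Cl_\Phi(B)\subseteq Cl_\Phi(Cl_\Phi(C))=Cl_\Phi(C)$. The main obstacle I anticipate is precisely here: idempotence of $Cl_\Phi$ is literally only the instance of $D_4$ in which the first set is a \emph{singleton} and $B=Cl_\Phi(C)$, so promoting it to an arbitrary set $A$ is the real content of the converse. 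The natural bridge is the compatibility relation $A\ \delta_\Phi\ B\Leftrightarrow Cl_\Phi(A)\ \delta_\Phi\ Cl_\Phi(B)$, the descriptive analogue of Property~2; I would aim to establish this from the Kuratowski axioms and monotonicity, and then combine it with $B\subseteq Cl_\Phi(C)$ and idempotence to obtain $A\ \delta_\Phi\ C$. Should this compatibility not follow from the closure axioms alone, the converse must be read under the standing assumption that $\delta_\Phi$ is determined by its induced closure, and I would carry out the argument in that setting.
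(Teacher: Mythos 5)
Your forward direction is correct and complete: pairing $D_0$ with $Cl_{\Phi}(\emptyset)=\emptyset$, $D_2$ with extensivity (via $\{x\}\cap A\subseteq \{x\}\dcap A$), $D_1$ and $D_3$ with additivity, and $D_4$ (instantiated with $\{x\}$, $Cl_{\Phi}(A)$, $A$) with idempotence is exactly the right decomposition. It is also considerably more than the paper supplies: the paper's proof of this theorem is a single sentence that restates the claim without verifying any axiom, so your argument is effectively the first real proof of the implication from Lodato to Kuratowski.

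The genuine gap is in the converse, and you have put your finger on it precisely. Idempotence of $Cl_{\Phi}$ is only the instance of $D_4$ in which the first set is a singleton, and the compatibility relation $A\ \dnear\ B\Leftrightarrow Cl_{\Phi}(A)\ \dnear\ Cl_{\Phi}(B)$ that you hope to extract is itself equivalent to the missing strength of $D_4$; it cannot be recovered from the Kuratowski axioms, which only see $\dnear$ restricted to singletons in one argument. The converse is in fact false for a general \v{C}ech descriptive proximity. Take $X=\mathbb{N}\cup\{\omega\}$ with $\Phi$ injective (so that $A\dcap B=A\cap B$), and declare $A\ \dnear\ B$ iff $A\cap B\neq\emptyset$, or $\omega\in A$ and $B\cap\mathbb{N}$ is infinite, or $\omega\in B$ and $A\cap\mathbb{N}$ is infinite. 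Each clause is symmetric and additive in each argument, so $D_0$--$D_3$ hold; the induced closure is $Cl_{\Phi}(A)=A\cup\{\omega\}$ when $A\cap\mathbb{N}$ is infinite and $Cl_{\Phi}(A)=A$ otherwise, which is a Kuratowski operator. Yet with $E$ the evens and $O$ the odds one has $E\ \dnear\ (O\cup\{\omega\})$ and $O\cup\{\omega\}\subseteq Cl_{\Phi}(O)$ while $E$ is not $\dnear$-related to $O$, so $D_4$ fails. Consequently the theorem only holds in the direction you proved, unless the converse is read under an added hypothesis (such as the compatibility relation itself, or point-determinacy of $\delta_{\Phi}$) -- which is essentially the caveat you state at the end. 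The paper's own argument does not engage with this direction at all.
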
 
\begin{proof}
Let $A,B,C\subset Cl_{\Phi}(D)$ and let $\delta_\Phi$ is a Lodato  $\Phi-$descriptive proximity.  The descriptive forms of P$_0$-P$_3$ of Lodato proximity for $\dnear$ are satisfied for $A,B,C$, if and only if $Cl_{\Phi}$ is a Kuratowski operator.
\end{proof}

\subsection{Examples} \

Peters proximity is the $\Phi-$pull back of the set-intersection. The set-intersection can be considered in two different aspects. It is the finest proximity on one side and the weakest overlap relation on the other side. So, to construct significant examples   of descriptive  proximities weaker than the Peters proximity we have two possible approaches: the proximal approach, which arises when  looking  at the the set-intersection as a proximity; the overlap approach, when looking at the set-intersection  as an overlap relation.  
 
\subsection{Proximity approach}
 Let $X$ be a nonempty set, $A$ and $B$ be subsets of $X, \  \Phi: X \rightarrow \mathbb{R}^n$ be a probe and $\delta$ be a proximity on $\mathbb{R}^n$. Then, if we define $\delta_\Phi$ as follows:

\begin{center}  $A \ \dnear \  B \Leftrightarrow \Phi(A) \ \delta \ \Phi(B) $ 
\end{center}
 we get a descriptive proximity. The descriptive proximity $\delta_\Phi $ and the  standard proximity $\delta$  are very close to each other absorbing  and transferring  their own   similar properties to the other. 
 \begin{theorem}
The proximity  $\delta$ is a  \u Cech, Lodato or  an EF-proximity  iff,  for each description $\Phi,    \  \dnear$ is  a  \u Cech, Lodato or an EF $\Phi-$ descriptive proximity.
\begin{proof}
We consider only classical EF proximity vs. EF $\Phi-$ descriptive proximity.  The equivalence between the two EF holds when the previous axioms hold.
\end{proof}
\end{theorem}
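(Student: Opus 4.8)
The plan is to exploit the fact that $\dnear$ is nothing but the $\Phi$-pullback of $\delta$, so that under the dictionary $A \dnear B \Leftrightarrow \Phi(A)\,\delta\,\Phi(B)$ each descriptive axiom $D_i$ (and the descriptive EF property) transcribes into the corresponding classical axiom applied to the image sets $\Phi(A),\Phi(B),\Phi(C)$. Before touching the axioms I would record the elementary facts about the set-map $\Phi$ that drive every case: $\Phi(A)=\emptyset$ iff $A=\emptyset$; $\Phi(A\cup B)=\Phi(A)\cup\Phi(B)$; $\Phi(\{b\})=\{\Phi(b)\}$ and every point of $\Phi(B)$ equals $\Phi(b)$ for some $b\in B$; and, most importantly, $A\dcap B\neq\emptyset$ iff $\Phi(A)\cap\Phi(B)\neq\emptyset$ (forward: a witness $x\in A\cup B$ has $\Phi(x)\in\Phi(A)\cap\Phi(B)$; backward: a common value $v=\Phi(a)=\Phi(b)$ places $a$ in $A\dcap B$). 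I would also note the monotonicity forced by $P_3$: if $S\subseteq T$ then $\Phi(A)\,\far\,T\Rightarrow\Phi(A)\,\far\,S$.

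For the forward implication (assume $\delta$ is of the stated type) I would check the descriptive axioms one at a time. $D_0$ is $P_0$ together with $\Phi(A)\neq\emptyset\Leftrightarrow A\neq\emptyset$; $D_1$ is the image form of $P_1$; $D_2$ is $P_2$ read through the descriptive-intersection equivalence above; and $D_3$ is $P_3$ combined with $\Phi(B\cup C)=\Phi(B)\cup\Phi(C)$. The only delicate point in the Lodato case is the quantifier in $D_4$: from $\{b\}\dnear C$ for every $b\in B$ I get $\{\Phi(b)\}\,\delta\,\Phi(C)$ for every $b$, and since each $v\in\Phi(B)$ is some $\Phi(b)$, the classical hypothesis $\{v\}\,\delta\,\Phi(C)$ holds for all $v\in\Phi(B)$; then $P_4$ gives $\Phi(A)\,\delta\,\Phi(C)$, i.e. $A\dnear C$.

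The EF step is where I expect the only genuine friction. From $A\dfar B$, i.e. $\Phi(A)\,\far\,\Phi(B)$, the classical EF property of $\delta$ yields $F\subseteq\mathbb{R}^n$ with $\Phi(A)\,\far\,F$ and $(\mathbb{R}^n\setminus F)\,\far\,\Phi(B)$. The natural witness inside $X$ is $E:=\Phi^{-1}(F)$, but here one cannot hope for equalities, only the inclusions $\Phi(E)\subseteq F$ and $\Phi(X\setminus E)=\Phi(\Phi^{-1}(\mathbb{R}^n\setminus F))\subseteq\mathbb{R}^n\setminus F$. This is precisely why the $P_3$-monotonicity of $\far$ under shrinking a set is indispensable: it upgrades $\Phi(A)\,\far\,F$ to $\Phi(A)\,\far\,\Phi(E)$ and $(\mathbb{R}^n\setminus F)\,\far\,\Phi(B)$ to $\Phi(X\setminus E)\,\far\,\Phi(B)$, i.e. $A\dfar E$ and $X\setminus E\dfar B$, which is the descriptive EF property.

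For the reverse implication I would simply specialize. Taking $X=\mathbb{R}^n$ and $\Phi=\mathrm{id}$, the dictionary degenerates to $A\dnear B\Leftrightarrow A\,\delta\,B$ and $A\dcap B=A\cap B$, so the descriptive axioms for $\dnear$ become literally the classical axioms for $\delta$; hence if $\dnear$ is a \v Cech (resp. Lodato, EF) descriptive proximity for every probe, then in particular for the identity probe, forcing $\delta$ to be \v Cech (resp. Lodato, EF). The main obstacle throughout is thus isolated in the EF step, and specifically in verifying that the pullback $E=\Phi^{-1}(F)$ of the classical separating set does the job despite the loss of equality under $\Phi$ and $\Phi^{-1}$, which the monotonicity observation repairs.
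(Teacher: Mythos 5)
Your proposal is correct, and it is considerably more than the paper provides: the paper's ``proof'' is a one-sentence assertion (``the equivalence between the two EF holds when the previous axioms hold'') with no actual argument, so there is no real approach to compare against beyond the implicit plan of transcribing each axiom through $\Phi$. Your write-up executes that plan and, more importantly, supplies the two steps that are genuinely non-automatic and that the paper silently skips. First, the EF case: the classical separating set $F\subseteq\mathbb{R}^n$ does not pull back to a set $E$ with $\Phi(E)=F$ and $\Phi(X\setminus E)=\mathbb{R}^n\setminus F$, only to inclusions, and your observation that $P_3$ forces antitonicity of $\far$ under shrinking the second argument is exactly what repairs this; your choice $E=\Phi^{-1}(F)$ with $\Phi(X\setminus E)\subseteq\mathbb{R}^n\setminus F$ is the right witness. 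Second, the converse: the paper never says how ``for each $\Phi$'' yields the classical property, and your specialization to the identity probe on $\mathbb{R}^n$ (under which $\dcap$ degenerates to $\cap$ and $\dnear$ to $\delta$) is the clean way to do it --- note only that this reading requires the quantifier over descriptions to range over probes on arbitrary domains, not on a single fixed $X$, since for a fixed non-surjective $\Phi$ the relation $\dnear$ only sees $\delta$ restricted to $\mathscr{P}(\Phi(X))$. Your handling of $D_4$ (converting the pointwise hypothesis over $b\in B$ into the pointwise hypothesis over $v\in\Phi(B)$ via surjectivity of $B\to\Phi(B)$) and your use of the equivalence $A\dcap B\neq\emptyset\Leftrightarrow\Phi(A)\cap\Phi(B)\neq\emptyset$ for $D_2$ are both correct.
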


Observe that, given a proximity $\delta$ on $\mathbb{R}^n$, $\delta_\Phi$ is the coarsest proximity on $X$ for which the probe $\Phi$ is proximally continuous, i.e. $A \ \delta_\Phi \ B \Rightarrow \  \Phi(A) \ \delta \ \Phi(B)$~\cite[\S 1.7, p. 16]{Naimpally2013}.

Of course, the prototype is the Peters proximity when $\mathbb{R}^n$ is equipped with the discrete proximity. In  this case the $Cl_{\Phi}( A)$ is the  $\Phi-$preimage   of $\Phi(A).$
 
Another significant example is the fine Lodato descriptive proximity.

When $\mathbb{R}^n$ is equipped with the Euclidean topology, the finest Lodato proximity $\delta_0$ is an EF-proximity. The relative descriptive proximity  $\dnear^0,$ the fine Lodato descriptive proximity, is in its turn an EF-descriptive proximity.

  \emph{The  fine  Lodato  descriptive proximity}:
\[ A \  \dnear^0 \ B \Leftrightarrow  Cl_E(\Phi(A)) \cap  Cl_E(\Phi(B)) \neq \emptyset\]

\begin{conjecture} The fine Lodato descriptive proximity is the finest one  among all  "general" Lodato descriptive proximities as in the classical case.
\qquad \textcolor{blue}{$\blacksquare$}
\end{conjecture}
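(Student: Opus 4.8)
The plan is to reduce this descriptive statement to the classical minimality of the fine Lodato proximity via the $\Phi$-pullback, using the fact that pulling back along $\Phi$ preserves the ordering of proximities. First I would record the key observation that the fine Lodato descriptive proximity is exactly the $\Phi$-pullback of the classical fine Lodato proximity on the feature space: writing $\delta_0^E$ for the Euclidean fine Lodato proximity on $\mathbb{R}^n$ (so $U\ \delta_0^E\ V \Leftrightarrow Cl_E U \cap Cl_E V \neq \emptyset$), the definition gives $A\ \dnear^0\ B \Leftrightarrow \Phi(A)\ \delta_0^E\ \Phi(B)$ at once. I would then read a \emph{general} Lodato descriptive proximity $\dnear$ as the pullback of some Lodato proximity $\delta$ on $\mathbb{R}^n$ compatible with the Euclidean topology; since the computation in the Proposition gives $Cl_{\dnear}(A) = \Phi^{-1}(Cl_E(\Phi(A)))$ for every such pullback, they all induce the same $\Phi$-topology as $\dnear^0$, so the comparison is being made ``at fixed topology,'' exactly as in the classical case. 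Here ``finest'' means $\dnear^0 \subseteq \dnear$, i.e. $\dnear^0$ has the fewest near pairs.

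The heart of the argument is the classical lemma that $\delta_0^E$ is the finest Lodato proximity compatible with the Euclidean topology, i.e. $\delta_0^E \subseteq \delta$ for every such $\delta$. I would prove this from the Lodato axiom $P_4$: if $Cl_E U \cap Cl_E V \neq \emptyset$, pick $p$ in the intersection; since $\delta$ is compatible, $\cl_\delta U = Cl_E U$ and $\cl_\delta V = Cl_E V$, so $\{p\}\ \delta\ U$ and $\{p\}\ \delta\ V$. Using symmetry $P_1$ and then $P_4$ with $A = U$, $B = \{p\}$, $C = V$ yields $U\ \delta\ V$. Monotonicity of the pullback finishes the reduction: from $\delta_0^E \subseteq \delta$ we get $A\ \dnear^0\ B \Rightarrow \Phi(A)\ \delta_0^E\ \Phi(B) \Rightarrow \Phi(A)\ \delta\ \Phi(B) \Rightarrow A\ \dnear\ B$, so $\dnear^0 \subseteq \dnear$, which is the claim.

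The step I expect to be the genuine obstacle — and the likely reason the statement is posed as a conjecture — is whether one may really confine attention to pullback proximities when reading ``general.'' If instead $\dnear$ is an arbitrary intrinsic Lodato $\Phi$-descriptive proximity on $X$ with $Cl_{\dnear}(A) = \Phi^{-1}(Cl_E(\Phi(A)))$, the direct analogue of the $P_4$ argument stalls: the common point $p \in Cl_E(\Phi(A)) \cap Cl_E(\Phi(B))$ is a point of $\mathbb{R}^n$, and to run $D_4$ on $X$ I need a witness $x_0 \in X$ with $\Phi(x_0) = p$, so that $\{x_0\}\ \dnear\ A$ and $\{x_0\}\ \dnear\ B$. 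When $p \notin \Phi(X)$ no such witness exists and the implication need not survive. This is precisely the gap between the ambient feature space $\mathbb{R}^n$ and the realized feature set $\Phi(X)$, a gap that is absent in the classical formulation.

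I would therefore split according to whether $\Phi(X)$ is Euclidean-closed. If $\Phi(X)$ is closed in $\mathbb{R}^n$ (for instance when $X$ is compact or $\Phi(X)$ is complete), then $Cl_E(\Phi(A)) \subseteq \Phi(X)$, every such $p$ is realized as $\Phi(x_0)$, and the intrinsic $D_4$ argument goes through verbatim, proving the conjecture in full. For general $\Phi(X)$ the natural remedy is to replace $Cl_E$ by the subspace closure in $\Phi(X)$ when defining $\dnear^0$, which keeps all limit points inside $\Phi(X)$ and again repairs the argument. Settling the conjecture exactly as stated — with $Cl_E$ and with ``general'' in the widest sense — thus hinges on controlling $\dnear$ at the frontier points of $\Phi(X)$ that are not themselves feature values, and this is where I expect the real work to lie.
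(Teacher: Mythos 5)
The paper offers no argument for this statement at all: it is posed as an open conjecture, so there is no ``paper's own proof'' to measure you against, and your proposal has to be judged as an attempt to settle an open question. Judged that way, the parts you actually prove are correct. The observation that $\dnear^0$ is the $\Phi$-pullback of the Euclidean fine Lodato proximity, the $P_4$-argument that $\delta_0^E$ is the finest Lodato proximity compatible with the Euclidean topology, and the monotonicity of the pullback together give a clean proof that $\dnear^0$ is finest \emph{within the class of pullbacks of Euclidean-compatible Lodato proximities}; and your $D_4$-argument for intrinsic descriptive proximities does go through when $\Phi(X)$ is Euclidean-closed, since then every witness $p\in Cl_E(\Phi(A))\cap Cl_E(\Phi(B))$ is realized as $\Phi(x_0)$ for some $x_0\in X$.

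But you have not settled the conjecture, and you say so yourself: the case of an arbitrary ``general'' Lodato $\Phi$-descriptive proximity with $\Phi(X)$ not closed is exactly where your argument stalls, and that is the whole content of the conjecture as stated. Two further points deserve emphasis. First, your reading of ``general'' as ``at fixed descriptive closure operator'' is not forced by the paper's axioms $D_0$--$D_4$, which say nothing about compatibility with a topology; yet some such restriction is \emph{necessary} for the statement even to be true, since the Peters proximity $\pi_\Phi$ is itself a Lodato (indeed EF) descriptive proximity and $\dnear^0\not\subseteq\pi_\Phi$ in general (take $\Phi=\mathrm{id}$ on $\mathbb{R}$, $A=(0,1)$, $B=(1,2)$: then $Cl_E(A)\cap Cl_E(B)=\{1\}$ but $A\cap B=\emptyset$). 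So part of ``the real work'' is not analytic but definitional: pinning down the comparison class. Second, your proposed repair --- replacing $Cl_E$ by the subspace closure in $\Phi(X)$ --- changes the proximity being discussed and hence proves a modified statement, not the conjecture as written. In short: a correct and genuinely useful partial analysis that isolates the obstruction, but not a proof.
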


Based on the definition in \cite{Peters2016Computational}, we can also consider the descriptive closure of a set 
\[ Cl_{\Phi} A = \{x : \ x \dnear A \}= \{x: \Phi(x) \in  Cl_{E}(\Phi(A))\} \]

We prove now that we can re-write the fine descriptive proximity in terms of descriptive closures.

\

\begin{proposition}
 Let $X$ be a non-empty set, $A$ and $B$ be subsets of $X$, and $\Phi: X \rightarrow \mathbb{R}^n$ be a probe.
 \[ A \  \dnear^0 \ B \Leftrightarrow Cl(\Phi(A)) \cap  Cl(\Phi(B)) \neq \emptyset \Leftrightarrow Cl_{\Phi} A \cap  Cl_{\Phi} B \neq \emptyset.\]
\end{proposition}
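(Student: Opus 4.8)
The plan is to prove the chain of equivalences by exhibiting the middle condition $Cl(\Phi(A)) \cap Cl(\Phi(B)) \neq \emptyset$ as the hinge, and to show each displayed condition is equivalent to it. The first biconditional,
\[
A \ \dnear^0 \ B \Leftrightarrow Cl(\Phi(A)) \cap Cl(\Phi(B)) \neq \emptyset,
\]
is essentially definitional: the fine Lodato descriptive proximity was introduced by $A \ \dnear^0 \ B \Leftrightarrow Cl_E(\Phi(A)) \cap Cl_E(\Phi(B)) \neq \emptyset$, so the only thing to observe is that $Cl$ here denotes the Euclidean closure $Cl_E$ on $\mathbb{R}^n$, which is the topology we have fixed on the feature space. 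So this step carries no content beyond unwinding notation.

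The substance lies in the second biconditional,
\[
Cl(\Phi(A)) \cap Cl(\Phi(B)) \neq \emptyset \Leftrightarrow Cl_{\Phi} A \cap Cl_{\Phi} B \neq \emptyset.
\]
First I would recall the formula for the descriptive closure established just above, namely $Cl_{\Phi} A = \{x : \Phi(x) \in Cl_E(\Phi(A))\} = \Phi^{-1}(Cl_E(\Phi(A)))$, and likewise for $B$. The key algebraic fact I would then invoke is that $\Phi^{-1}$ commutes with intersection: for any subsets $S, T$ of $\mathbb{R}^n$ we have $\Phi^{-1}(S) \cap \Phi^{-1}(T) = \Phi^{-1}(S \cap T)$. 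Applying this with $S = Cl_E(\Phi(A))$ and $T = Cl_E(\Phi(B))$ gives
\[
Cl_{\Phi} A \cap Cl_{\Phi} B = \Phi^{-1}\bigl(Cl_E(\Phi(A)) \cap Cl_E(\Phi(B))\bigr).
\]

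It then remains to note that $\Phi^{-1}(S) \neq \emptyset$ if and only if $S$ meets the range $\Phi(X)$. So the right-hand side above is nonempty precisely when $Cl_E(\Phi(A)) \cap Cl_E(\Phi(B))$ meets $\Phi(X)$. The main obstacle I anticipate is exactly this point: a priori $Cl_E(\Phi(A)) \cap Cl_E(\Phi(B))$ could be a nonempty set of feature vectors that happens to contain no actual value $\Phi(x)$, in which case its $\Phi^{-1}$-preimage would be empty and the equivalence would fail. I would resolve this by arguing that any point of $Cl_E(\Phi(A))$ is a limit of points of $\Phi(A) \subseteq \Phi(X)$, so $Cl_E(\Phi(A)) \subseteq Cl_E(\Phi(X))$, and similarly for $B$; the crux is whether limit points lying in both closures can be pulled back. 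The cleanest route is to observe that whenever the two Euclidean closures intersect, one may take a common point $p$ and, using that $p \in Cl_E(\Phi(A))$ together with $\Phi(A) \subseteq \Phi(X)$, produce a genuine feature value witnessing the nonemptiness of the preimage — or, more simply, to restrict attention to the subspace $\Phi(X)$ of $\mathbb{R}^n$ so that both closures are taken relative to the range, making every point of the intersection automatically an attained value. With that subtlety handled, the three conditions collapse into one and the proposition follows.
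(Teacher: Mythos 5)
Your decomposition is essentially the same as the paper's: unwind the definition of $\dnear^0$, write $Cl_{\Phi}A=\Phi^{-1}(Cl_E(\Phi(A)))$, use that preimages commute with intersections, and reduce everything to whether $Cl_E(\Phi(A))\cap Cl_E(\Phi(B))$ meets $\Phi(X)$. You deserve credit for isolating that last question explicitly: the paper's own proof passes over it silently, since its very first step asserts ``$Cl(\Phi(A))\cap Cl(\Phi(B))\neq\emptyset \Leftrightarrow \exists y\in\Phi(X): y\in Cl(\Phi(A))\cap Cl(\Phi(B))$'' without justification.

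However, neither of your proposed resolutions closes the gap, and in fact the problematic implication fails in general. Take $X=\mathbb{N}\times\{0,1\}$ with $\Phi(n,0)=1-1/n$, $\Phi(n,1)=1+1/n$, and $A=\mathbb{N}\times\{0\}$, $B=\mathbb{N}\times\{1\}$. Then $Cl_E(\Phi(A))\cap Cl_E(\Phi(B))=\{1\}\neq\emptyset$, so $A\ \dnear^0\ B$, yet $1\notin\Phi(X)$, hence $Cl_{\Phi}A\cap Cl_{\Phi}B=\Phi^{-1}(\{1\})=\emptyset$. Your first suggested fix --- producing ``a genuine feature value witnessing the nonemptiness of the preimage'' from a common limit point $p$ --- cannot be carried out, precisely because $p$ need not be attained and no attained value need lie in both closures. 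Your second suggestion (take closures relative to the subspace $\Phi(X)$) does make the two conditions equivalent, but it silently replaces $\dnear^0$ by a different relation, so it proves a different proposition. What holds without modification is the one implication $Cl_{\Phi}A\cap Cl_{\Phi}B\neq\emptyset \Rightarrow A\ \dnear^0\ B$; the converse needs an additional hypothesis such as $\Phi(X)$ closed in $\mathbb{R}^n$ (e.g.\ $X$ finite), under which $Cl_E(\Phi(A))\subseteq\Phi(X)$ and your argument, like the paper's, goes through verbatim.
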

\begin{proof}
$Cl(\Phi(A)) \cap \cl(\Phi(B)) \neq \emptyset \Leftrightarrow \exists y \in \Phi(X):  y \ \in  Cl(\Phi(A)) \cap  Cl(\Phi(B)) \Leftrightarrow y \ \delta \ \Phi(A)$ and $y \ \delta \ \Phi(B) \Leftrightarrow \exists x \in X: y= \Phi(x), \ \Phi(x) \ \delta \ \Phi(A)$ and $\Phi(x) \ \delta \ \Phi(B) \ \Leftrightarrow \exists x \in  Cl_{\Phi} A \cap  Cl_{\Phi} B \Leftrightarrow  Cl_{\Phi} A \cap  Cl_{\Phi} B \neq \emptyset.$
\end{proof}

\setlength{\intextsep}{0pt}

\begin{wrapfigure}[13]{R}{0.35\textwidth}
\begin{minipage}{4.2 cm}
\centering
\includegraphics[width=30mm]{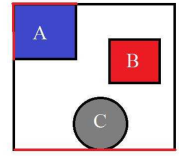}
\caption[]{$\mbox{}$\\ $\pi_\Phi \Rightarrow \beta_\Phi$}
\label{fig.pi-beta}
\end{minipage}
\end{wrapfigure} 

When requiring   $A \ \pi_\Phi \ B,$ we look at the match of the entire feature vectors on points of $A$ and $B$. But, it can be useful to consider a fixed part of the vector of feature values.  In this way descriptive nearness of sets can be established  on a partial match of descriptions. To achieve this result, we introduce:
\begin{definition}{$\left(\boldsymbol{\beta_{\Phi}}\right)$}.\\
Let $X$ be a non-empty set, $A$ and $B$ be subsets of $X$,\\ 
and $\Phi: X \rightarrow \mathbb{R}^n$ be a probe. We define 
\begin{center}
 $A \ \beta_\Phi \ B \Leftrightarrow \Phi_i(A) \cap \Phi_i(B) \neq \emptyset, \ \forall i = 1,...,n $
\end{center}
 \end{definition}
\
Further, by generalizing $\beta_\Phi$  by composing the probe $\Phi$ with the projection $\pi_m$:
\begin{center}
{\small $x \ \longrightarrow  \     (\pi_m \circ \Phi)(x)= (\phi_1(x),..., \phi_m(x)).$}
\end{center}
we have:
\begin{definition}{$\left(\boldsymbol{\eta_{\Phi}}\right)$}.\\
\begin{center}
$ A \ \eta_\Phi \ B \Leftrightarrow \pi_m(\Phi(A)) \cap \pi_m(\Phi(B)) \neq \emptyset. $
\end{center}
 \end{definition}

\begin{proposition}\label{prop:eta}
The relation $\eta_\Phi,$ then the relation $\beta_\Phi,$ is a $\Phi-$descriptive EF-proximity.
\qquad \textcolor{blue}{$\blacksquare$}
\end{proposition}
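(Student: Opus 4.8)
The plan is to reduce both relations to a Peters proximity of an appropriately chosen probe, so that the theorem identifying Peters proximity as an Efremovi\v{c} proximity (with $R_0$, Alexandroff underlying topology) carries the entire load. First I would handle $\eta_\Phi$. The composite $\Psi \assign \pi_m \circ \Phi : X \to \mathbb{R}^m$, $\Psi(x) = (\phi_1(x), \dots, \phi_m(x))$, is again a probe on $X$, and since the image of a composite map is the composite of the images, $\Psi(A) = \pi_m(\Phi(A))$ for every $A \subseteq X$. Hence
\[
A \ \eta_\Phi \ B \iff \pi_m(\Phi(A)) \cap \pi_m(\Phi(B)) \neq \emptyset \iff \Psi(A) \cap \Psi(B) \neq \emptyset ,
\]
which says exactly that $A$ and $B$ are $\pi_\Psi$-near. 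Thus $\eta_\Phi$ is nothing but the Peters proximity of the probe $\Psi$, and the earlier theorem immediately gives that $\eta_\Phi$ is an EF $\Phi$-descriptive proximity.

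For $\beta_\Phi$ I would exhibit it as built from the one-dimensional coordinate probes $\phi_i : X \to \mathbb{R}$: by definition $A \ \beta_\Phi \ B$ holds precisely when $A \ \pi_{\phi_i} \ B$ holds for every $i = 1, \dots, n$, so $\beta_\Phi$ is the conjunction over $i$ of the Peters proximities $\pi_{\phi_i}$, each of which is EF by the same theorem. The elementary axioms transfer to the conjunction at once: $D_0$ and $D_1$ are inherited coordinatewise, and for $D_2$ a point $x \in A \dcap B$ provides $\Phi(a) = \Phi(x) = \Phi(b)$ with $a \in A$, $b \in B$, so that $\phi_i(a) = \phi_i(b) \in \phi_i(A) \cap \phi_i(B)$ for each $i$, whence $A \ \beta_\Phi \ B$.

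The hard part will be the union axiom $D_3$, and more precisely its forward implication $A \ \beta_\Phi \ (B \cup C) \Rightarrow A \ \beta_\Phi \ B$ or $A \ \beta_\Phi \ C$. For $\eta_\Phi$ this direction is automatic, because $\Psi(B \cup C) = \Psi(B) \cup \Psi(C)$ and intersection distributes over union, so one block-match against $B \cup C$ localizes to a match against $B$ or against $C$; this is precisely the mechanism that makes the Peters reduction succeed. For $\beta_\Phi$ the universal quantifier over the coordinates sits outside the union, so the single witnessing coordinate may differ from point to point and the localization is no longer free — reconciling the coordinatewise conjunction with the distribution of intersection over the union is the delicate step on which the whole verification turns. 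Once $D_3$ is secured, the Lodato axiom $D_4$ and the Efremovi\v{c} condition follow by pushing the corresponding properties of each $\pi_{\phi_i}$ through the conjunction, completing the argument.
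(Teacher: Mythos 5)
Your reduction of $\eta_\Phi$ to the Peters proximity of the composite probe $\Psi=\pi_m\circ\Phi$ is correct and complete: $\Psi(B\cup C)=\Psi(B)\cup\Psi(C)$, so all the axioms and the Efremovi\v c property are inherited from the theorem on $\pi_\Phi$, and a point of $A\dcap B$ (descriptive intersection taken with respect to the full probe $\Phi$) yields $\Psi(A)\cap\Psi(B)\neq\emptyset$, so $D_2$ holds in the form the definition requires. Note that the paper states this proposition with no proof at all, so on the $\eta_\Phi$ half your argument actually supplies what the paper omits.

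For $\beta_\Phi$, however, you have correctly isolated the critical step --- the forward implication of $D_3$ --- and then left it open, and it cannot be closed: that implication is false in general. Take $n=2$, $X=\{a,b,c\}$ with $\Phi(a)=(0,0)$, $\Phi(b)=(0,1)$, $\Phi(c)=(1,0)$, and $A=\{a\}$, $B=\{b\}$, $C=\{c\}$. Then $\Phi_1(A)\cap\Phi_1(B\cup C)=\{0\}\neq\emptyset$ and $\Phi_2(A)\cap\Phi_2(B\cup C)=\{0\}\neq\emptyset$, so $A\ \beta_\Phi\ (B\cup C)$; but $\Phi_2(A)\cap\Phi_2(B)=\emptyset$ and $\Phi_1(A)\cap\Phi_1(C)=\emptyset$, so neither $A\ \beta_\Phi\ B$ nor $A\ \beta_\Phi\ C$. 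The logical obstruction is exactly the one you flagged: $\forall i\,(P_i\vee Q_i)$ does not imply $(\forall i\,P_i)\vee(\forall i\,Q_i)$, and the witnessing coordinate can genuinely switch between $B$ and $C$. Hence $\beta_\Phi$ fails $D_3$ and is not even a \v Cech $\Phi$-descriptive proximity once $n\geq 2$ and the coordinate probes are independent enough; no completion of your conjunction argument is possible, and the $\beta_\Phi$ clause of the proposition would need an extra hypothesis (e.g.\ $n=1$, or a single coordinate witnessing every match) or a weakening of $D_3$ to the one-way implication.
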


\begin{example}
For an illustration of Prop.~\ref{prop:eta}, see Fig.~\ref{fig.pi-beta}.
\end{example}
 
\begin{remark}
The topology associated with $\beta_\Phi$ is defined by the Kuratowski operator $Cl_{\beta_\Phi}$:
\[ x \in Cl_{\beta_\Phi}(A) \Leftrightarrow x \in \bigcap_{i=1,..,n} \Phi_i^{-1}(\Phi_i(A))
\mbox{\qquad \textcolor{blue}{$\blacksquare$}}
\]
\end{remark}

A third kind of descriptive relation, but not a descriptive nearness, defined  by probes and intersection is given as follows.

\begin{definition}\label{def.gamma}{$\left(\boldsymbol{\gamma_{\Phi}}\right)$}.\\
 Let $X$ be a non-empty set, $A$ and $B$ be subsets of $X$, and $\Phi: X \rightarrow \mathbb{R}^n$ be a probe. We define
$ A\ \gamma_{\Phi}\ B \Leftrightarrow \ \exists i \in \{1,..,n\} : \ \Phi_i(A) \cap \Phi_i(B) \neq \emptyset $. 
 \end{definition}

\begin{figure}[!ht]
\centering
\includegraphics[width=35mm]{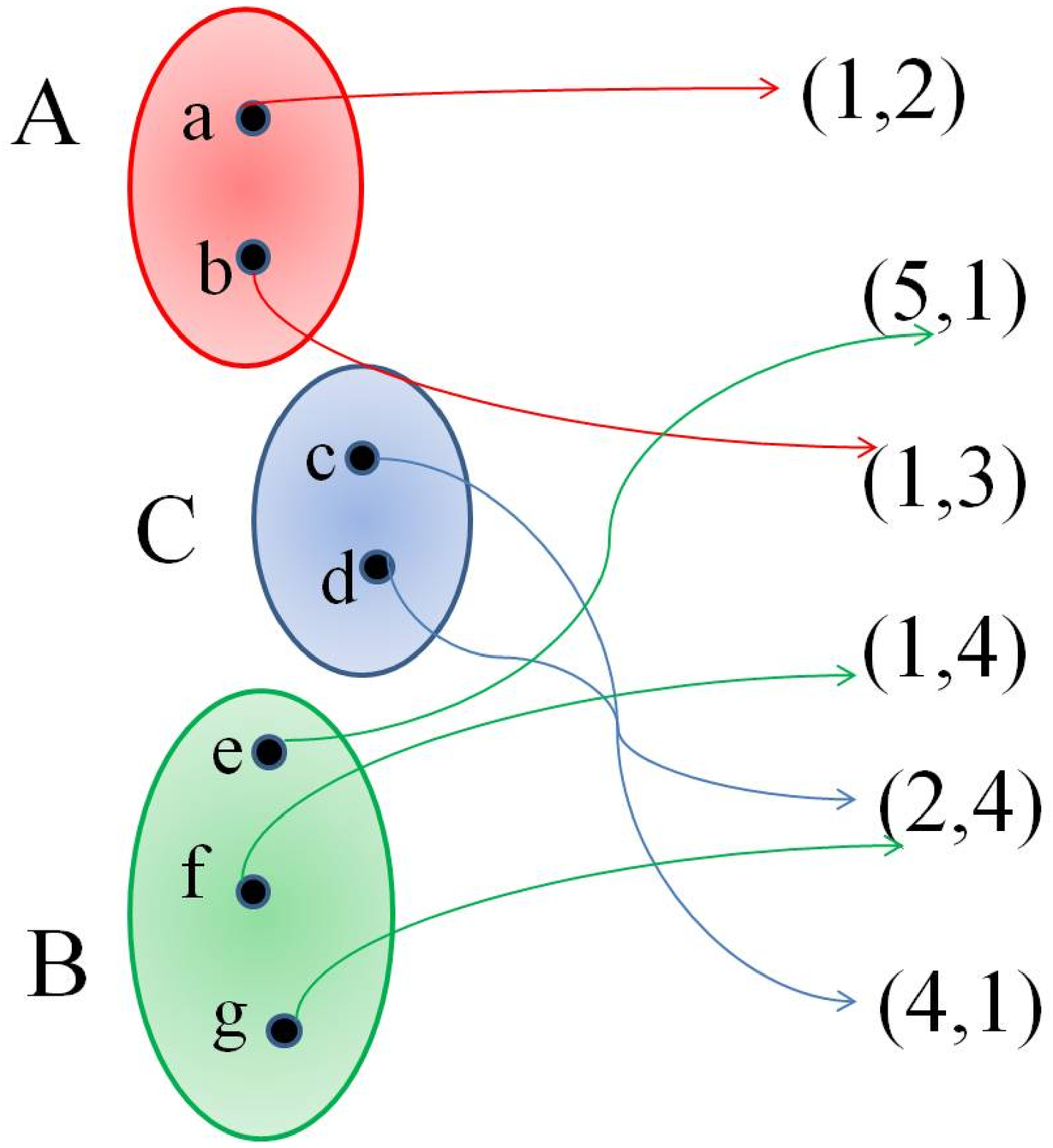}
\caption[]{$\gamma_\Phi\neq\delta_{\Phi}$}
\label{fig.gamma}
\end{figure}
 
 The relation $\gamma_\Phi$ is not a  descriptive proximity. We illustrate this by the following example based on Fig.~\ref{fig.gamma}.

\begin{example}
Let $A = \{a, b\}$, 
$C= \{c, d\}$, 
$B= \{e, f , g \}$. 
In this figure we have  
$\Phi_1(A)= \{1\}, \ \Phi_2(A)= \{2,3\}, \Phi_1(C)= \{2,4\}, \ \Phi_2(C)= \{4,1 \}, \ \Phi_1(B)= \{1, 2, 5 \}$. So $A \gamma_\Phi B$ because $\Phi_1(A) \cap \Phi_1(B) \neq \emptyset$, and for each $ x \in C$ $x  \ \gamma_\Phi \ C$. But $A \not\gamma_\Phi C$ because $\Phi_1(A) \cap \Phi_1(C) = \emptyset$ and $\Phi_2(A) \cap \Phi_2(C) = \emptyset$. In other words $\gamma_\Phi$ is not a Lodato proximity~\cite[\S 3.1, p. 72]{Naimpally2013}. 
\qquad \textcolor{blue}{$\blacksquare$}
\end{example}

\subsection{Overlapping approach}  \

Suppose that  for any subset $A$ of $X$ a specific enlargement, $e(\Phi(A)), \ $  of $\Phi(A)$  in $\mathbb{R}^n$ can be associated with $A$ and moreover, for any pair $A,B  \ :   e(\Phi(A)) \cup  e(\Phi(B)) =  e(\Phi(A) \cup  \Phi(B))$ (\emph{additivity}) and also $A \subseteq B  \ \Rightarrow  e(\Phi(A))  \subseteq   e(\Phi(B)),$ (\emph{extensionality})\cite{DiConcilio2013mcs}.  Then, if we put:

 $$ A \ \dnear \  B \  \hbox{ iff}  \  e(\Phi(A) \cap e(\Phi(B) \neq \emptyset$$

we have:
\begin{proposition} The relation  $\dnear \ $ is a 
$\Phi-$descriptive Lodato proximity.
\begin{proof} This result follows from the initial conditions. 
\end{proof}
\end{proposition}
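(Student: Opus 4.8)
The plan is to check the Lodato axioms $D_0$--$D_4$ for $\dnear$ one at a time, in each case translating a condition on subsets of $X$ into a condition on their enlarged descriptions inside $\mathbb{R}^n$. Throughout I abbreviate $E_A := e(\Phi(A))$, so that $A\ \dnear\ B$ means exactly $E_A\cap E_B\neq\emptyset$. Before starting I would record the two features of an \emph{enlargement} that the hypotheses leave implicit but that the proof consumes: $e$ is \emph{expansive} ($S\subseteq e(S)$) and \emph{normalised} ($e(\emptyset)=\emptyset$). Extensionality supplies monotonicity, and additivity controls $e$ on finite unions.

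The \v{C}ech axioms $D_0$--$D_3$ are routine. For $D_0$, normalisation gives $E_\emptyset=e(\emptyset)=\emptyset$, so if $A=\emptyset$ then $E_A\cap E_B=\emptyset$; hence $A\ \dnear\ B$ forces $A,B\neq\emptyset$. Axiom $D_1$ is immediate from commutativity of $\cap$. For $D_2$, if $A\dcap B\neq\emptyset$ then some $x$ satisfies $\Phi(x)\in\Phi(A)\cap\Phi(B)$, and expansiveness places this common description in both $E_A$ and $E_B$, giving $A\ \dnear\ B$. Axiom $D_3$ is where additivity is used: since $\Phi(B\cup C)=\Phi(B)\cup\Phi(C)$, additivity gives $E_{B\cup C}=E_B\cup E_C$, and $E_A\cap(E_B\cup E_C)$ is nonempty iff $E_A\cap E_B\neq\emptyset$ or $E_A\cap E_C\neq\emptyset$, i.e. iff $A\ \dnear\ B$ or $A\ \dnear\ C$.

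The substantive axiom is $D_4$: from $A\ \dnear\ B$ and $\{b\}\ \dnear\ C$ for every $b\in B$, I must produce $A\ \dnear\ C$. My strategy is the closure route: collapse the pointwise hypothesis into the single inclusion $\Phi(B)\subseteq E_C$ and then push $e$ through it. First I would show $\{b\}\ \dnear\ C\Leftrightarrow\Phi(b)\in E_C$; in the prototype $e=Cl_E$ this is immediate, since singletons are closed in the Euclidean topology, so $e(\{\Phi(b)\})=\{\Phi(b)\}$ and $\{b\}\ \dnear\ C\Leftrightarrow\{\Phi(b)\}\cap E_C\neq\emptyset\Leftrightarrow\Phi(b)\in E_C$. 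The hypothesis then yields $\Phi(B)\subseteq E_C$. Applying $e$ and using monotonicity together with idempotence $e(E_C)=e(e(\Phi(C)))=E_C$ gives $E_B=e(\Phi(B))\subseteq e(E_C)=E_C$. Finally $A\ \dnear\ B$ supplies a point of $E_A\cap E_B\subseteq E_A\cap E_C$, so $A\ \dnear\ C$.

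The main obstacle is this last inclusion $E_B\subseteq E_C$, and it is precisely where more than additivity and extensionality is needed: it rests on the \emph{idempotence} of $e$ (and on enlargements of singletons being small). Both hold for the motivating enlargement $e=Cl_E$, for which $\dnear^0$ is the fine Lodato descriptive proximity, but neither follows from additivity and extensionality alone; indeed an additive, monotone, expansive but non-idempotent neighbourhood-style enlargement on a three-point space already breaks $D_4$. I would therefore prove the proposition under the standing assumption that $e$ is closure-like (idempotent in addition to additive, monotone, expansive and normalised); equivalently, by the Kuratowski-operator theorem above, $D_4$ holds exactly when the induced closure $Cl_\Phi$ is idempotent. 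Without idempotence the construction still yields only the \v{C}ech axioms $D_0$--$D_3$.
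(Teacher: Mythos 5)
Your verification is correct and is substantially more than the paper offers: the paper's entire proof is the sentence ``This result follows from the initial conditions,'' so there is no argument to compare step by step, and your axiom-by-axiom check is the right way to test that claim. Your treatment of $D_0$--$D_3$ is sound, and you are right that it silently consumes two properties the paper never states but that the word ``enlargement'' suggests, namely $\Phi(A)\subseteq e(\Phi(A))$ (for $D_2$) and $e(\emptyset)=\emptyset$ (for $D_0$); additivity and extensionality alone give neither. More importantly, your diagnosis of $D_4$ exposes a genuine defect in the proposition as stated: additivity and extensionality do not imply the Lodato axiom, and the paper's own featured instance, the $\epsilon$-enlargement, is already a counterexample. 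Take $X=\mathbb{R}$, $\Phi$ the identity, $\epsilon=1$, $A=\{0\}$, $B=\{1.9\}$, $C=\{3.8\}$: then $e(\Phi(A))=(-1,1)$ meets $e(\Phi(B))=(0.9,2.9)$, and $e(\Phi(B))$ meets $e(\Phi(C))=(2.8,4.8)$, so $A\ \dnear\ B$ and $\{b\}\ \dnear\ C$ for the unique $b\in B$, yet $e(\Phi(A))\cap e(\Phi(C))=\emptyset$. So the relation is in general only a \v{C}ech $\Phi$-descriptive proximity, exactly as you conclude, and your added standing hypothesis that $e$ be closure-like (idempotent, with singleton enlargements meeting $e(\Phi(C))$ only when the point itself lies in $e(\Phi(C))$) is not a cosmetic strengthening but the missing ingredient; under it your closure-route argument for $D_4$ is complete. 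The one thing I would ask you to make explicit is the three-point (or the $\epsilon$-enlargement) counterexample you allude to, since it is what turns your observation from a caveat into a correction of the statement.
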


When choosing as $\epsilon >0 $ as level of  approximation  and as enlargement for any subset of  $\mathbb{R}^n$ the $\epsilon-$enlargement, we have a peculiar case in the overlapping approach.
It is not possible to remove additivity or extensionality as the following  geometric example, related to the affine structure of $\mathbb{R}^n$, proves:
 
 \begin{center} 
$A \ \dnear \  B \Leftrightarrow  \hbox{conv}(\Phi(A)) \cap \hbox{conv}(\Phi(B)) \neq \emptyset, $
\end{center}
\noindent where conv$(\Phi(A))=$ minimal convex set containing $\Phi(A).$ 
The above  relation verifies the properties $D_0, D_1, D_2, D_4$ but only one way in $D_3.$

 \subsection{Second option: stronger form}

   This  is the case in which two sets descriptively near have a 
	nonempty descriptive intersection: $A\ \dnear\ B \Rightarrow A \dcap B \neq \emptyset. $ 
	\	

Let $X$ be a non-empty set, $A, \ B, \ C $  be subsets of $X$  and $\Phi: X \rightarrow \mathbb{R}^n$ be a probe.

The relation $\snd$ on $\mathscr{P}(X)$ is a $\Phi-$descriptive  Lodato  strong proximity\cite{PetersGuadagni2015stronglyNear} iff the following properties hold:

\begin{description}
\item[$(S_0)$] $A\ \snd\ B \Rightarrow A \neq \emptyset $ and $B \neq \emptyset $
\item[$(S_1)$] $A\ \snd\ B \Leftrightarrow B\ \snd\ A$
\item[$(S_2)$] $A \snd B \Rightarrow  A \dcap B  \neq \emptyset$
\item[$(S_3)$] $A\ \snd\ (B \cup C) \Leftrightarrow A\ \snd\ B $ or $A\ \snd\ C$ 
\item[$(S_4)$] $\ A\ \snd\ B \hbox{ and } \{ b \}\ \snd\ C \hbox{ for each } b \in B \ \Rightarrow A\ \snd\ C .$
\end{description}
\bigskip

 As an example, when we can distinguish a significant subset $S \subseteq \Phi(X),$ we can put:  $ A\ \snd\ B $ if and only if 
  $\Phi(A)$ shares some common point with $ \Phi(B)$
   belonging to $S$, and obtain a strong  $\Phi-$descriptive proximity.

\bibliographystyle{amsplain}
\bibliography{CPrefs}

\end{document}